\newcommand{\noun}[1]{\textsc{#1}}
\DeclareRobustCommand{\greektext}{%
  \fontencoding{LGR}\selectfont\def\encodingdefault{LGR}}
\DeclareRobustCommand{\textgreek}[1]{\leavevmode{\greektext #1}}
\theoremstyle{plain}
\theoremstyle{plain}
\newtheorem{thm}{Theorem}
  \theoremstyle{plain}
  \newtheorem{cor}[thm]{Corollary}
  \theoremstyle{plain}
  \newtheorem*{conjecture*}{Conjecture}
  \theoremstyle{plain}
  \newtheorem{prop}[thm]{Proposition}
  \theoremstyle{plain}
  \newtheorem*{thm*}{Theorem}
  \theoremstyle{plain}
  \newtheorem{lem}[thm]{Lemma}
\newcounter{EQNR}
\begin{document}

\title{Spectral zeta functions of graphs and the Riemann zeta function in
the critical strip}

\author{Fabien Friedli and Anders Karlsson%
\thanks{The authors were supported in part by the Swiss NSF grant 200021 132528/1.%
}}
\maketitle
\begin{abstract}
We initiate the study of spectral zeta functions $\zeta_{X}$ for
finite and infinite graphs $X$, instead of the Ihara zeta function,
with a perspective towards zeta functions from number theory and connections
to hypergeometric functions. The Riemann hypothesis is shown to be
equivalent to an approximate functional equation of graph zeta functions.
The latter holds at all points where Riemann's zeta function $\zeta(s)$
is non-zero. This connection arises via a detailed study of the asymptotics
of the spectral zeta functions of finite torus graphs in the critcal
strip and estimates on the real part of the logarithmic derivative
of $\zeta(s)$. We relate $\zeta_{\mathbb{Z}}$ to Euler's beta integral
and show how to complete it giving the functional equation $\xi_{\mathbb{Z}}(1-s)=\xi_{\mathbb{Z}}(s)$.
This function appears in the theory of Eisenstein series although
presumably with this spectral intepretation unrecognized. In higher
dimensions $d$ we provide a meromorphic continuation of $\zeta_{\mathbb{Z}^{d}}(s)$
to the whole plane and identify the poles. From our aymptotics several
known special values of $\zeta(s)$ are derived as well as its non-vanishing
on the line $Re(s)=1$. We determine the spectral zeta functions of
regular trees and show it to be equal to a specialization of Appell's
hypergeometric function $F_{1}$ via an Euler-type integral formula
due to Picard. 
\end{abstract}

\section{Introduction}

In order to study the Laplace eigenvalues $\lambda_{n}$ of bounded
domains $D$ in the plane, Carleman employed the function \[
\zeta_{D}(s)=\sum_{n=1}^{\infty}\frac{1}{\lambda_{n}^{s}}\]
taking advantage of techniques from the theory of Dirichlet series
including Ikehara's Tauberian theorem \cite{Ca34}. This was followed-up
in \cite{P39}, and developed further in \cite{MP49} for the case
of compact Riemannian manifolds. These zeta functions have since played
a role in the definitions of determinants of Laplacians and analytic
torsion, and they are important in theoretical physics \cite{Ha77,El12,RV15}.
For graphs it has been popular and fruitful to study the Ihara zeta
function, which is an analog of the Selberg zeta function in turn
modeled on the Euler product of Riemann's zeta function. Serre noted
that Ihara's definition made sense for any finite graph and this suggestion
was taken up and developed by Sunada, Hashimoto, Hori, Bass and others,
see \cite{Su86,Te10}.

The present paper has a three-fold objective. First, we advance the
study of spectral zeta functions of graphs, instead of the Ihara zeta
function. We do this even for infinite graphs where the spectrum might
be continuous. For the most fundamental infinite graphs, this study
leads into the theory of hypergeometric function in several variables,
such as those of Appell, and gives rise to several questions. 

Second, we study the asymptotics of spectral zeta functions for finite
torus graphs as they grow to infinity, in a way similar to what is
often considered in statistical physics (see for example \cite{DD88})\emph{.
}The study of limiting sequences of graphs is also a subject of significant
current mathematical interest, see \cite{Lo12,Ly10,LPS14}.\emph{
}Terms appearing in our asymptotic expansions are zeta functions of
lattice graphs and of continuous torus which are Epstein zeta function
from number theory. This relies to an important extent on the work
of Chinta, Jorgenson, and the second-named author \cite{CJK10}, in
particular we quote and use without proof several results established
in this reference.

Third, we provide a new perspective on some parts of analytic number
theory, in two ways. In one way, this comes via replacing partial
sums of Dirichlet series by zeta functions of finite graphs. Although
the latter looks somewhat more complicated, they have more structure,
being a spectral zeta function, and are decidedly easier in some respects.
We show the equivalence of the Riemann hypothesis with a conjectural
functional equation for graph spectral zeta functions, and this seems
substantially different from other known reformulations of this important
problem \cite{RH07}. In a second way, the spectral zeta function
of the graph $\mathbb{Z}$ enjoys properties analogous to the Riemann
zeta function, notably the relation $\xi_{\mathbb{Z}}(1-s)=\xi_{\mathbb{Z}}(s)$,
and it appears \emph{incognito }as fudge factor in a few instances
in the classical theory, such as in the Fourier development of Eisenstein
series.

For us, a spectral zeta function $\zeta_{X}$ of a space $X$ is the
Mellin transform of the heat kernel of $X$ at the origin, removing
the trivial eigenvalue if applicable, and divided by a gamma factor
(\emph{cf.} \cite{JL12}). Alternatively one can define this function
by an integration against the spectral measure.

Consider a sequence of discrete tori $\mathbb{Z}^{d}/A_{n}\mathbb{Z}^{d}$
indexed by $n$ and where the matrices $A_{n}$ are diagonal with
entries $a_{i}n$, and integers $a_{i}>0.$ The matrix $A$ is the
diagonal matrix with entries $a_{i}$. We show the following for any
dimension $d\geq1$:
\begin{thm}
\label{thm:Zd}The following asymptotic expansion as $n\rightarrow\infty$
is valid for $Re(s)<d/2+1,$ and $s\neq d/2$,\[
\zeta_{\mathbb{Z}^{d}/A_{n}\mathbb{Z}^{d}}(s)=\zeta_{\mathbb{Z}^{d}}(s)\det A\, n^{d}+\zeta_{\mathbb{R}^{d}/A\mathbb{Z}^{d}}(s)n^{2s}+o(n^{2s}).\]

\end{thm}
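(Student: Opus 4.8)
The plan is to pass through the heat kernel and the Mellin transform. Writing $\Theta_n(t)=\sum_k e^{-t\lambda_k}$ for the heat trace of the discrete torus, whose eigenvalues are $\lambda_k=\sum_{i=1}^d 4\sin^2(\pi k_i/(a_i n))$ with $k_i$ running over $\mathbb{Z}/(a_i n)\mathbb{Z}$, one has $\zeta_{\mathbb{Z}^d/A_n\mathbb{Z}^d}(s)=\frac{1}{\Gamma(s)}\int_0^\infty t^{s-1}(\Theta_n(t)-1)\,dt$, the $-1$ removing the zero mode. The first step is to insert the factorised Bessel/periodisation form of the trace quoted from \cite{CJK10}, namely $\Theta_n(t)=\det A\,n^d\,e^{-2dt}\sum_{m\in\mathbb{Z}^d}\prod_{i=1}^d I_{m_i a_i n}(2t)$, and to split off the $m=0$ term from the rest.

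The $m=0$ term is exactly $\det A\,n^d\,(e^{-2t}I_0(2t))^d=\det A\,n^d\,p_t^{\mathbb{Z}^d}(0,0)$, so its Mellin transform contributes precisely $\det A\,n^d\,\zeta_{\mathbb{Z}^d}(s)$ (valid first for $0<\mathrm{Re}(s)<d/2$, then by the meromorphic continuation of $\zeta_{\mathbb{Z}^d}$ established earlier). This already produces the first term of the expansion with no error. Writing $R_n(t)$ for the $m\neq0$ remainder, so that $\Theta_n(t)-1=\det A\,n^d\,(e^{-2t}I_0(2t))^d+(R_n(t)-1)$, it remains to analyse $J_n(s)=\int_0^\infty t^{s-1}(R_n(t)-1)\,dt$ and to show $J_n(s)=\Gamma(s)\,\zeta_{\mathbb{R}^d/A\mathbb{Z}^d}(s)\,n^{2s}+o(n^{2s})$.

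Here I would rescale $t=n^2\tau$, which extracts a clean factor and leaves $J_n(s)=n^{2s}\int_0^\infty\tau^{s-1}(R_n(n^2\tau)-1)\,d\tau$. For fixed $\tau>0$ the quadratic approximation $n^2\lambda_k\to 4\pi^2\sum_i k_i^2/a_i^2=\mu_k$ gives $\Theta_n(n^2\tau)\to\Theta^{\mathrm{cont}}(\tau):=\sum_m e^{-\mu_m\tau}$, the heat trace of the continuous torus, while the $m=0$ bulk tends to $\det A\,(4\pi\tau)^{-d/2}$; hence $R_n(n^2\tau)-1\to\Theta^{\mathrm{cont}}(\tau)-1-\det A\,(4\pi\tau)^{-d/2}$ pointwise. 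A short computation using the Jacobi (Poisson summation) inversion of $\Theta^{\mathrm{cont}}$ and splitting the $\tau$-integral at $1$ identifies the Mellin transform of this limit with the meromorphically continued $\Gamma(s)\,\zeta_{\mathbb{R}^d/A\mathbb{Z}^d}(s)$ on $0<\mathrm{Re}(s)<d/2$, the subtracted counterterm $\det A\,(4\pi\tau)^{-d/2}$ accounting for the pole of the Epstein factor at $s=d/2$.

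The main obstacle is the final step: upgrading pointwise convergence to convergence of the integrals and pushing the range up to $\mathrm{Re}(s)<d/2+1$. For $0<\mathrm{Re}(s)<d/2$ I would invoke dominated convergence with uniform-in-$n$ bounds $|R_n(n^2\tau)-1|\lesssim\min(1,\tau^{-d/2})$ drawn from the uniform large-argument asymptotics $I_\nu(z)=\frac{e^z}{\sqrt{2\pi z}}e^{-\nu^2/(2z)}(1+O(1/z))$ quoted from \cite{CJK10}. The decisive point for the sharper range is that, as $\tau\to\infty$, the leading $-\det A\,(4\pi\tau)^{-d/2}$ tails of $R_n(n^2\tau)-1$ and of its limit cancel, the first surviving discrepancy being $O(n^{-2}\tau^{-d/2-1})$, coming jointly from the subleading Bessel correction and from the quartic term $4\sin^2 x=4x^2-\tfrac43x^4+\cdots$ in the eigenvalues. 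Consequently the difference integral $\int_0^\infty\tau^{s-1}\bigl(R_n(n^2\tau)-1-(\Theta^{\mathrm{cont}}(\tau)-1-\det A\,(4\pi\tau)^{-d/2})\bigr)\,d\tau$ converges exactly for $\mathrm{Re}(s)<d/2+1$ and is $O(n^{-2})=o(1)$ there; this simultaneously yields the error $o(n^{2s})$ and, by analytic continuation of the identity from the strip $0<\mathrm{Re}(s)<d/2$, delivers the expansion on all of $\mathrm{Re}(s)<d/2+1$, with $s=d/2$ excluded as the common pole of the two zeta factors. The delicate part throughout is securing these uniform Bessel/heat-kernel estimates across the short-, intermediate-, and long-time regimes simultaneously in $n$, which is exactly where the quoted results of \cite{CJK10} carry the load.
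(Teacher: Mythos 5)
Your proposal is correct and follows essentially the same route as the paper: the heat-trace Mellin transform with the Bessel theta identity from \cite{CJK10}, the rescaling $t=n^2\tau$, splitting off the $m=0$ bulk term $\det A\,n^d\,(e^{-2t}I_0(2t))^d$ to produce $\det A\,n^d\,\zeta_{\mathbb{Z}^d}(s)$, the counterterm $\det A\,(4\pi\tau)^{-d/2}$ identifying the remainder's limit with $\zeta_{\mathbb{R}^d/A\mathbb{Z}^d}(s)$, and the subleading Bessel asymptotics giving both the extended range $\mathrm{Re}(s)<d/2+1$ and the $O(n^{-2})$ tail, with the uniform convergence estimates delegated to \cite{CJK10} exactly as in the paper. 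The only difference is cosmetic: you keep one global difference integral over $(0,\infty)$ where the paper splits at $\tau=1$ and tracks the pole terms $-1/s$ and $(4\pi)^{-d/2}\det A/(s-d/2)$ explicitly via its pieces $S_1$, $S_2$, $S_3$ and $S_{rest}$.
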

The formula reflects that as $n$ goes to infinity the finite torus
graph can be viewed as converging to $\mathbb{Z}^{d}$ on the one
hand, and rescaled to the continuous torus $\mathbb{R\mathrm{^{d}/\mathbb{Z}^{d}}}$
on the other hand. For $Re(s)>d/2$ one has\begin{equation}
\lim_{n\rightarrow\infty}\frac{1}{n^{2s}}\zeta_{\mathbb{Z}^{d}/A_{n}\mathbb{Z}^{d}}(s)=\zeta_{\mathbb{R}^{d}/A\mathbb{Z}^{d}}(s),\label{eq:zetalimit}\end{equation}

as already shown in \cite{CJK10}, see also section \ref{sec:Asymptotics}
below. One can verify that it is legitimate to differentiate in the
asymptotics in Theorem \ref{thm:Zd} and if we then set $s=0$, we
recover as expected the main asymptotic formula in \cite{CJK10} in
the case considered. The asymptotics of the determinant of graph Laplacians
is a topic of significant interest, see \cite[Conclusion]{RV15} for
a recent discussion from the point of view of quantum field theory,
and see \cite{Lü02} for related determinants in the context of $L^{2}$-invariants.

We now specialize to the case $d=1$. In particular, the spectral
zeta function of the finite cyclic graph $\mathbb{Z\mathrm{/}}n\mathbb{Z}$
(see e.g. \cite{CJK10} for details and section \ref{sec:Spectral-zeta-functions})
is \[
\zeta_{\mathbb{Z\mathrm{/}}n\mathbb{Z}}(s)=\frac{1}{4^{s}}\sum_{k=1}^{n-1}\frac{1}{\sin^{2s}(\pi k/n)}.\]

The spectral zeta function of the graph $\mathbb{Z}$ is\[
\zeta_{\mathbb{Z}}(s)=\frac{1}{\Gamma(s)}\int_{0}^{\infty}e^{-2t}I_{0}(2t)t^{s}\frac{dt}{t},\]
where it converges, which it does for $0<Re(s)<1/2$. From this definition
it is not immediate that its meromorphic continuation admits a functional
equation much analogous to classical zeta functions:
\begin{thm}
Let the completed zeta function for $\mathbb{Z}$ be defined as \[
\xi_{\mathbb{Z}}(s)=2^{s}\cos(\pi s/2)\zeta_{\mathbb{Z}}(s/2).\]
Then this is an entire function that satisfies for all $s\in\mathbb{C}$
\[
\xi_{\mathbb{Z}}(s)=\xi_{\mathbb{Z}}(1-s).\]

\end{thm}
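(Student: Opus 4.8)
The plan is to first obtain a closed-form expression for $\zeta_{\mathbb{Z}}(s)$ in terms of Gamma functions, from which both the meromorphic continuation and the functional equation will follow by elementary manipulations. The natural starting point is the classical integral representation $I_{0}(2t)=\frac{1}{\pi}\int_{0}^{\pi}e^{2t\cos\theta}\,d\theta$. Substituting this into the defining integral and interchanging the order of integration — justified by Tonelli's theorem in the strip $0<\mathrm{Re}(s)<1/2$, where the integrand is positive and the $\theta$-integral converges near $\theta=0$ precisely because $\mathrm{Re}(s)<1/2$ — gives
\[
\zeta_{\mathbb{Z}}(s)=\frac{1}{\pi\Gamma(s)}\int_{0}^{\pi}\left(\int_{0}^{\infty}e^{-2t(1-\cos\theta)}t^{s}\frac{dt}{t}\right)d\theta=\frac{1}{\pi}\int_{0}^{\pi}\frac{d\theta}{(2-2\cos\theta)^{s}},
\]
where the inner integral produces a factor $\Gamma(s)$ that cancels the $1/\Gamma(s)$. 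Writing $2-2\cos\theta=4\sin^{2}(\theta/2)$ and substituting $u=\theta/2$ reduces this to Euler's beta integral $\int_{0}^{\pi/2}\sin^{-2s}u\,du=\tfrac12 B(\tfrac12-s,\tfrac12)$, yielding the closed form
\[
\zeta_{\mathbb{Z}}(s)=\frac{1}{4^{s}\sqrt{\pi}}\frac{\Gamma\!\left(\tfrac12-s\right)}{\Gamma(1-s)}.
\]

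This closed form immediately furnishes the meromorphic continuation to all of $\mathbb{C}$. Substituting $s\mapsto s/2$ and multiplying by $2^{s}\cos(\pi s/2)$ gives
\[
\xi_{\mathbb{Z}}(s)=\frac{\cos(\pi s/2)}{\sqrt{\pi}}\frac{\Gamma\!\left(\tfrac{1-s}{2}\right)}{\Gamma\!\left(1-\tfrac{s}{2}\right)}.
\]
To see this is entire, note that $\Gamma((1-s)/2)$ has simple poles exactly at the odd positive integers $s=1,3,5,\dots$, and these are cancelled by the simple zeros of $\cos(\pi s/2)$ at the same points; since $1/\Gamma(1-s/2)$ is entire (the reciprocal Gamma function has no poles), $\xi_{\mathbb{Z}}$ has no singularities anywhere.

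For the functional equation I would compute $\xi_{\mathbb{Z}}(1-s)$ directly. Under $s\mapsto 1-s$ one has $\cos(\pi s/2)\mapsto\sin(\pi s/2)$, $\Gamma((1-s)/2)\mapsto\Gamma(s/2)$, and $\Gamma(1-s/2)\mapsto\Gamma((1+s)/2)$, so it suffices to verify
\[
\cos(\pi s/2)\,\frac{\Gamma\!\left(\tfrac{1-s}{2}\right)}{\Gamma\!\left(\tfrac{2-s}{2}\right)}=\sin(\pi s/2)\,\frac{\Gamma\!\left(\tfrac{s}{2}\right)}{\Gamma\!\left(\tfrac{1+s}{2}\right)}.
\]
Here the reflection formula $\Gamma(z)\Gamma(1-z)=\pi/\sin(\pi z)$ does all the work: applied with $z=(1+s)/2$ it gives $\Gamma((1-s)/2)\Gamma((1+s)/2)=\pi/\cos(\pi s/2)$, and applied with $z=s/2$ it gives $\Gamma(s/2)\Gamma(1-s/2)=\pi/\sin(\pi s/2)$. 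Substituting these two identities turns the desired equality into the tautology $\cos(\pi s/2)\tan(\pi s/2)=\sin(\pi s/2)$, proving the functional equation. The one genuinely nontrivial step is the evaluation of the defining integral in the first paragraph — in particular the recognition that the heat-kernel Mellin transform collapses onto a beta integral — whereas once the closed form is in hand, both the entirety and the functional equation reduce to cancellation of poles against trigonometric zeros and to the reflection formula, requiring no further analytic input.
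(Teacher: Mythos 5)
Your proposal is correct and follows essentially the same route as the paper: first establish the closed form $\zeta_{\mathbb{Z}}(s)=\frac{1}{4^{s}\sqrt{\pi}}\frac{\Gamma(1/2-s)}{\Gamma(1-s)}$ (the paper's Proposition \ref{pro:beta}, which it also records as $\frac{1}{4^{s}\pi}B(1/2,1/2-s)$), then obtain entirety from the cancellation of the $\Gamma((1-s)/2)$ poles at odd positive integers by the zeros of $\cos(\pi s/2)$, and the functional equation from Euler's reflection formula. The only difference is one of packaging: you derive the closed form self-containedly via the cosine integral representation of $I_{0}$ and Euler's beta integral where the paper cites formula 11.4.13 of \cite{AS64}, and you verify the symmetric identity $\xi_{\mathbb{Z}}(s)=\xi_{\mathbb{Z}}(1-s)$ directly, whereas the paper first derives the asymmetric relation $\zeta_{\mathbb{Z}}(s/2)=2^{1-2s}\tan(\pi s/2)\,\zeta_{\mathbb{Z}}((1-s)/2)$ and then rewrites it.
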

This raises the question: Are there other spectral zeta functions
of graphs with similar properties? 

The function $\zeta_{\mathbb{Z}}$ actually appears implicitly in
classical analytic number theory. Let us exemplify this point. To
begin with\[
\zeta_{\mathbb{Z}}(s)=\frac{1}{4^{s}\sqrt{\pi}}\frac{\Gamma(1/2-s)}{\Gamma(1-s)},\]
which is the crucial fact behind the result above. Now, in the main
formula of Chowla-Selberg in \cite{SC67} the following term appears:\[
\frac{2^{2s}a^{s-1}\sqrt{\pi}}{\Gamma(s)\Delta^{s-1/2}}\zeta(2s-1)\Gamma(s-1/2).\]
Here lurks $\zeta_{\mathbb{Z}}(1-s)$, not only by correctly combining
the two gamma factors, but also incorporating the factor $2^{2s}$
and explaining the appearance of $\sqrt{\pi}$. In other words, the
term above equals\[
\frac{4\pi a^{s-1}}{\Delta^{s-1/2}}\zeta(2s-1)\zeta_{\mathbb{Z}}(1-s).\]

Upon dividing by the Riemann zeta function $\zeta(s)$, this term
is called scattering matrix (function) in the topic of Fourier expansions
of Eisenstein series and is complicated or unknown for discrete groups
more general than $SL(2,\mathbb{Z})$, see \cite[section 15.4]{IK04}
and \cite{Mü08}. We believe that the interpretation of such fudge
factors as spectral zeta functions is new and may provide some insight
into how such factors arise more generally.

The Riemann zeta function is essentially the same as the spectral
zeta function of the circle $\mathbb{R\mathrm{/\mathbb{Z}}}$, more
precisely one has\begin{equation}
\zeta_{\mathbb{R\mathrm{/\mathbb{Z}}}}(s)=2(2\pi)^{-2s}\zeta(2s).\label{eq:Riemannzeta}\end{equation}

Here is a specialization of Theorem \ref{thm:Zd} to $d=1$ with explicit
functions and some more precision:
\begin{thm}
\label{thm:d=00003D1}For $s\neq1$ with $Re(s)<3$ it holds that\[
\sum_{k=1}^{n-1}\frac{1}{\sin^{s}(\pi k/n)}=\frac{1}{\sqrt{\pi}}\frac{\Gamma(1/2-s/2)}{\Gamma(1-s/2)}n+2\pi^{-s}\zeta(s)n^{s}+o(n^{s})\]
as $n\rightarrow\infty$. In the critical strip, $0<Re(s)<1,$ more
precise asymptotics can be found, such as\[
\sum_{k=1}^{n-1}\frac{1}{\sin^{s}(\pi k/n)}=\frac{1}{\sqrt{\pi}}\frac{\Gamma(1/2-s/2)}{\Gamma(1-s/2)}n+2\pi^{-s}\zeta(s)n^{s}+\frac{s}{3}\pi^{2-s}\zeta(s-2)n^{s-2}+o(n^{s-2})\]
as $n\rightarrow\infty$.
\end{thm}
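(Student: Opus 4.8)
The plan is to read the first expansion straight off Theorem \ref{thm:Zd}, and to obtain the sharper critical-strip expansion by a separate, more delicate summation of the sine sum treated as a Riemann sum with endpoint singularities.

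For the first expansion I would begin from the cycle-graph identity $\zeta_{\mathbb{Z}/n\mathbb{Z}}(s)=4^{-s}\sum_{k=1}^{n-1}\sin^{-2s}(\pi k/n)$ recalled in the excerpt, replace $s$ by $s/2$ to get $\sum_{k=1}^{n-1}\sin^{-s}(\pi k/n)=2^{s}\zeta_{\mathbb{Z}/n\mathbb{Z}}(s/2)$, and apply Theorem \ref{thm:Zd} with $d=1$ and $A_{n}=n$ (so $\det A=1$) at the argument $s/2$. Its hypotheses $\operatorname{Re}(s/2)<3/2$ and $s/2\neq1/2$ become exactly $\operatorname{Re}(s)<3$ and $s\neq1$, and they yield $2^{s}\zeta_{\mathbb{Z}}(s/2)\,n+2^{s}\zeta_{\mathbb{R}/\mathbb{Z}}(s/2)\,n^{s}+o(n^{s})$. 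Inserting the explicit evaluations $\zeta_{\mathbb{Z}}(s)=\frac{1}{4^{s}\sqrt{\pi}}\frac{\Gamma(1/2-s)}{\Gamma(1-s)}$ and $\zeta_{\mathbb{R}/\mathbb{Z}}(s)=2(2\pi)^{-2s}\zeta(2s)$ gives $2^{s}\zeta_{\mathbb{Z}}(s/2)=\frac{1}{\sqrt{\pi}}\frac{\Gamma(1/2-s/2)}{\Gamma(1-s/2)}$ and $2^{s}\zeta_{\mathbb{R}/\mathbb{Z}}(s/2)=2\pi^{-s}\zeta(s)$, which are precisely the two stated coefficients.

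For the refined expansion I would analyse $S_{n}=\sum_{k=1}^{n-1}f(k/n)$ with $f(x)=\sin^{-s}(\pi x)$ directly. From $(u/\sin u)^{s}=1+\tfrac{s}{6}u^{2}+O(u^{4})$ one gets the local expansion $\sin^{-s}(\pi x)=\pi^{-s}x^{-s}+\tfrac{s}{6}\pi^{2-s}x^{2-s}+O(x^{4-s})$ near $x=0$, and symmetrically at $x=1$. Subtracting the explicit model $P(x)=\pi^{-s}(x^{-s}+(1-x)^{-s})+\tfrac{s}{6}\pi^{2-s}(x^{2-s}+(1-x)^{2-s})$ leaves a remainder $g=f-P$ that is $C^{3}$ on $[0,1]$ and symmetric under $x\mapsto1-x$. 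For each pure power I would use the Euler--Maclaurin continuation $\sum_{k=1}^{n-1}k^{-\alpha}=\zeta(\alpha)+\frac{n^{1-\alpha}}{1-\alpha}-\tfrac12 n^{-\alpha}+\cdots$ (valid for $\alpha\neq1$ via the Hurwitz zeta asymptotics); after $k\mapsto n-k$ the two endpoints contribute equally, so $x^{-s}$ produces $2\pi^{-s}\zeta(s)n^{s}$, the power $x^{2-s}$ produces $\tfrac{s}{3}\pi^{2-s}\zeta(s-2)n^{s-2}$, and $g$ contributes $n\int_{0}^{1}g$. The $O(n)$ pieces of the power sums combine with this so that the full $n^{1}$ coefficient collapses to $n\int_{0}^{1}\sin^{-s}(\pi x)\,dx=\frac{n}{\sqrt{\pi}}\frac{\Gamma(1/2-s/2)}{\Gamma(1-s/2)}$ by Euler's beta integral.

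The delicate point, and the one I expect to be the main obstacle, is certifying the error term. In the critical strip $\operatorname{Re}(s-2)\in(-2,-1)$, so the constant and the $n^{-1}$ corrections are both \emph{larger} than the retained $n^{s-2}$ term; the claimed error $o(n^{s-2})$ is therefore legitimate only if every integer-power correction cancels. I would verify this explicitly: the constant terms of the two power sums sum to $-\pi^{-s}-\tfrac{s}{6}\pi^{2-s}$, exactly cancelling the Euler--Maclaurin boundary term $-\tfrac12(g(0)+g(1))=\pi^{-s}+\tfrac{s}{6}\pi^{2-s}$ of the remainder, and the $n^{-1}$ contributions cancel against $\tfrac{1}{12n}(g'(1)-g'(0))$ by the same bookkeeping (using $g'(1)=-g'(0)$). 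These cancellations are forced, since the integer powers reassemble the globally smooth Euler--Maclaurin expansion of $f$, but they must be checked to justify the estimate. The remaining contributions — from the $C^{3}$ remainder of $g$, of order $o(n^{-2})$, and from the next singular power $x^{4-s}$, of order $n^{s-4}$ — are both $o(n^{s-2})$, which closes the argument.
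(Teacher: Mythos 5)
Your proposal is correct, and its two halves relate differently to the paper. For the first expansion you do exactly what the paper does: section \ref{sec:dimension one} likewise specializes Theorem \ref{thm:Zd} to $d=1$, $A_{n}=n$ at the argument $s/2$ and inserts the closed forms of $\zeta_{\mathbb{Z}}$ and $\zeta_{\mathbb{R}/\mathbb{Z}}$; your translation of the hypotheses ($\operatorname{Re}(s)<3$, $s\neq1$) and of the two coefficients is the same computation. For the refined critical-strip expansion the paper does not argue from scratch: it simply invokes the non-standard Euler--Maclaurin formula of Sidi \cite{Si04} for sums/integrals with algebraic endpoint singularities, which directly outputs $n\int_{0}^{1}\sin^{-s}(\pi x)\,dx+2\pi^{-s}\zeta(s)n^{s}+\tfrac{s}{3}\pi^{2-s}\zeta(s-2)n^{s-2}+o(n^{s-2})$, the singular exponents $-s$ and $2-s$ at the two endpoints each contributing a term $c_{\gamma}\zeta(-\gamma)n^{-\gamma}$ and no stray integer powers. (The paper also mentions, without carrying it out, an alternative via Proposition 4.7 of \cite{CJK10} and Poisson summation.) Your hand-rolled argument --- subtracting the singular model $P$, applying the Hurwitz-zeta continuation of power sums and classical Euler--Maclaurin to the $C^{3}$ remainder $g$, and then verifying that the $n^{0}$ and $n^{-1}$ corrections cancel between $P$ and $g$ --- is in effect a self-contained proof of precisely the special case of Sidi's theorem that the paper cites. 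The cancellation bookkeeping you flag as the delicate point is indeed the heart of the matter (in the strip $n^{0}$ and $n^{-1}$ both dominate $n^{s-2}$, so the claimed error is meaningless unless they vanish identically), and your explicit checks --- $g(0)=g(1)=-\pi^{-s}-\tfrac{s}{6}\pi^{2-s}$ against the $-\tfrac12$ constants of the four power sums, and $g'(1)=-g'(0)$ against the $-\tfrac{\alpha}{12}n^{-1}$ terms --- work out, with the collapse of the $n^{1}$ coefficients to $n\int_{0}^{1}f$ and Euler's beta integral giving the leading term. What your route buys is transparency and independence from \cite{Si04}; what the paper's citation buys is brevity and the full expansion to all orders at once. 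One stylistic caution: your remark that the cancellations are ``forced'' because the integer powers ``reassemble the globally smooth Euler--Maclaurin expansion of $f$'' is only a heuristic ($f$ is not smooth on $[0,1]$), so the explicit verification you perform is not optional --- but you do perform it, so the proof is complete.
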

For example, with $s=0$ the sum on the left equals $n-1$, and the
asymptotic formula hence confirms the well-known values $\Gamma(1/2)=\sqrt{\pi}$
and $\zeta(0)=-1/2$. On the line $Re(s)=1$, the asymptotics is critical
in the sense that the two first terms on the right balance each other
in size as a power of $n$. As a consequence, for all $t\neq0$ we
have that $\zeta(1+it)\neq0$ if and only if \[
\frac{1}{n}\sum_{k=1}^{n-1}\frac{1}{\sin^{1+it}(\pi k/n)}\]
diverges as $n\rightarrow\infty$. The latter sum does indeed diverge.
We do not have a direct proof of this at the moment, but it does follow
from a theorem of Wintner \cite{W47} since the improper integral
$\int\sin^{-1-it}(x)dx$ diverges at $x=0$. So we have that the Riemann
zeta function has no zeros on the line $Re(s)=1$, which is the crucial
input in the standard proof of the prime number theorem. It should
however be said that Wintner's theorem is known to already be intimately
related to the prime number theorem via works of Hardy-Littlewood.

As suggested to us by Jay Jorgenson, one may differentiate the formula
in Theorem \ref{thm:Zd} for $d=1$, as can be verified via the formulas
in section \ref{sec:Asymptotics}, and get a criterion for multiple
zeros: 
\begin{cor}
Let\[
c(s)=\frac{1}{2\sqrt{\pi}}\frac{\Gamma(1/2-s/2)}{\Gamma(1-s/2)}\left(\frac{\Gamma'(1/2-s/2)}{\Gamma(1/2-s/2)}-\frac{\Gamma'(1-s/2)}{\Gamma(1-s/2)}\right)\]
and\[
S(s,n)=c(s)n-\sum_{k=1}^{n-1}\frac{\log(\sin(\pi k/n))}{\sin^{s}(\pi k/n)}.\]
Then $\zeta$ has a multiple zero at $s,$ $0<Re(s)<1$ if and only
if $S(s,n)\rightarrow0$ as $n\rightarrow\infty$, and otherwise $S(s,n)\rightarrow\infty$
as $n\rightarrow\infty$. 
\end{cor}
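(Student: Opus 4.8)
The plan is to differentiate the refined critical-strip asymptotics of Theorem~\ref{thm:d=00003D1} in the variable $s$ and read off the leading behaviour of $S(s,n)$. Write $F(s,n)=\sum_{k=1}^{n-1}\sin^{-s}(\pi k/n)$. Since $\partial_s\sin^{-s}(\pi k/n)=-\log(\sin(\pi k/n))\sin^{-s}(\pi k/n)$, differentiating under the finite sum gives $\partial_s F(s,n)=-\sum_{k=1}^{n-1}\log(\sin(\pi k/n))\sin^{-s}(\pi k/n)$, so that $S(s,n)=c(s)\,n+\partial_s F(s,n)$. Thus the whole corollary reduces to understanding the asymptotics of $\partial_s F(s,n)$.

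First I would record the coefficient functions from the refined expansion: with $A(s)=\tfrac{1}{\sqrt\pi}\tfrac{\Gamma(1/2-s/2)}{\Gamma(1-s/2)}$, $B(s)=2\pi^{-s}\zeta(s)$ and $C(s)=\tfrac{s}{3}\pi^{2-s}\zeta(s-2)$, Theorem~\ref{thm:d=00003D1} reads $F(s,n)=A(s)n+B(s)n^{s}+C(s)n^{s-2}+o(n^{s-2})$ on $0<\mathrm{Re}(s)<1$. Logarithmic differentiation of $A$ gives $A'(s)=-\tfrac12 A(s)\bigl(\tfrac{\Gamma'(1/2-s/2)}{\Gamma(1/2-s/2)}-\tfrac{\Gamma'(1-s/2)}{\Gamma(1-s/2)}\bigr)=-c(s)$, which is exactly the identity that makes the terms linear in $n$ cancel in $S(s,n)$. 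Differentiating the expansion term by term therefore yields
\[
S(s,n)=\bigl(B'(s)+B(s)\log n\bigr)n^{s}+\bigl(C'(s)+C(s)\log n\bigr)n^{s-2}+o(n^{s-2}\log n).
\]

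With this in hand the trichotomy is immediate from the sizes $|n^{s}|=n^{\mathrm{Re}(s)}$ and $|n^{s-2}|=n^{\mathrm{Re}(s)-2}$ (here $S\to\infty$ means $|S|\to\infty$, as $S$ is complex). If $\zeta(s)\neq0$ then $B(s)\neq0$ and the term $B(s)n^{s}\log n$ dominates, so $|S(s,n)|\to\infty$. If $s$ is a simple zero then $B(s)=0$ removes the $\log n$ contribution at order $n^{s}$, but $B'(s)=2\pi^{-s}\zeta'(s)\neq0$, so $B'(s)n^{s}$ dominates and again $|S(s,n)|\to\infty$ because $\mathrm{Re}(s)>0$. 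If $s$ is a zero of order $\geq2$ then $B(s)=B'(s)=0$, the entire $n^{s}$-block vanishes, and the surviving terms are all $O(n^{s-2}\log n)$; since $\mathrm{Re}(s)-2<-1$ we have $n^{\mathrm{Re}(s)-2}\log n\to0$ and hence $S(s,n)\to0$. This establishes the equivalence: multiple zero $\Leftrightarrow S(s,n)\to0$, and otherwise $|S(s,n)|\to\infty$.

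The hard part, flagged already in the text, is justifying the term-by-term differentiation and controlling the differentiated remainder. I would argue via holomorphy and uniformity. The remainder $R(s,n)=F(s,n)-A(s)n-B(s)n^{s}-C(s)n^{s-2}$ is holomorphic in $s$ on the strip and, from the contour/Mellin representation underlying section~\ref{sec:Asymptotics}, satisfies $R(s,n)=o(n^{s-2})$ uniformly on compact subsets. Writing $\varepsilon_n(s)=n^{2-s}R(s,n)$, each $\varepsilon_n$ is holomorphic and $\varepsilon_n\to0$ uniformly on compacts, so by the Weierstrass/Cauchy estimates $\varepsilon_n'\to0$ uniformly as well. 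Then $\partial_s R(s,n)=n^{s-2}\bigl(\varepsilon_n(s)\log n+\varepsilon_n'(s)\bigr)$, and dividing by $n^{s-2}\log n$ gives $\varepsilon_n(s)+\varepsilon_n'(s)/\log n\to0$, so $\partial_s R(s,n)=o(n^{s-2}\log n)$, which is precisely the error bound used above and which tends to $0$. This holomorphy-plus-uniformity step is the only genuinely analytic point; everything else is the algebraic identity $A'(s)=-c(s)$ and the case distinction on the vanishing of $\zeta$ and $\zeta'$ at $s$.
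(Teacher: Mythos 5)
Your proof is correct and is essentially the paper's own argument: the corollary is obtained exactly by differentiating the refined critical-strip expansion of Theorem \ref{thm:d=00003D1} in $s$, using the identity $A'(s)=-c(s)$ to cancel the term linear in $n$, and running the trichotomy on $\zeta(s)$ and $\zeta'(s)$, where the multiple-zero case relies on the $o(n^{s-2})$ error so that the differentiated remainder $o(n^{s-2}\log n)$ still tends to $0$ because $\mathrm{Re}(s)<1$. The paper only gestures at the legitimacy of term-by-term differentiation (``as can be verified via the formulas in section \ref{sec:Asymptotics}''), and your Cauchy-estimate argument for the holomorphic remainder, granted uniformity of the $o(n^{s-2})$ bound on compacts, is a standard and adequate way of filling in that same step rather than a different route.
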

It is believed that all Riemann zeta zeros are simple. 

Similarily to the above discussion about the prime number theorem,
the Riemann hypothesis has a formulation in terms of the behaviour
of the sum of sines (here we can refer to \cite{So98} for comparison).
It turns out that with some further investigation there is, what we
think, a more intriguing formulation of the Riemann hypothesis. This
is in terms of functional equations and provides perhaps some further
heuristic evidence for its validity. Let\[
h_{n}(s)=(4\pi)^{s/2}\Gamma(s/2)n^{-s}\left(\zeta_{\mathbb{Z}/n\mathbb{Z}}(s/2)-n\zeta_{\mathbb{Z}}(s/2)\right).\]

\begin{conjecture*}
Let $s\in\mathbb{C}$ with $0<Re(s)<1.$ Then\[
\lim_{n\rightarrow\infty}\left|\frac{h_{n}(1-s)}{h_{n}(s)}\right|=1.\]

\end{conjecture*}
This is a kind of asymptotic or approximative functional equation,
and it is true almost everywhere as follows from the asymptotics above:
\begin{cor}
\label{cor:The-conjecture-holds}The conjecture holds in the crticial
strip wherever $\zeta(s)\neq0$.
\end{cor}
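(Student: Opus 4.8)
The plan is to show that $h_n(s)$ converges, as $n\to\infty$, to twice the completed Riemann zeta function, and then to read off the conclusion from the classical functional equation of the latter. The point is that the two explicit terms in Theorem~\ref{thm:d=00003D1} are precisely the ones that survive in $h_n$: the linear term is killed by the subtraction of $n\,\zeta_{\mathbb{Z}}(s/2)$, while the $n^{s}$ term, after multiplication by the gamma prefactor, produces exactly $2\Lambda(s)$.

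First I would unwind the definitions. Since $\zeta_{\mathbb{Z}/n\mathbb{Z}}(s/2)=4^{-s/2}\sum_{k=1}^{n-1}\sin^{-s}(\pi k/n)$, Theorem~\ref{thm:d=00003D1} gives, for $0<\mathrm{Re}(s)<1$,
\[
\zeta_{\mathbb{Z}/n\mathbb{Z}}(s/2)=2^{-s}\left(\frac{1}{\sqrt{\pi}}\frac{\Gamma(1/2-s/2)}{\Gamma(1-s/2)}\,n+2\pi^{-s}\zeta(s)\,n^{s}+o(n^{s})\right).
\]
On the other hand, the explicit evaluation $\zeta_{\mathbb{Z}}(s/2)=2^{-s}\pi^{-1/2}\Gamma(1/2-s/2)/\Gamma(1-s/2)$ shows that $n\,\zeta_{\mathbb{Z}}(s/2)$ cancels the leading linear term exactly, so that
\[
\zeta_{\mathbb{Z}/n\mathbb{Z}}(s/2)-n\,\zeta_{\mathbb{Z}}(s/2)=2^{1-s}\pi^{-s}\zeta(s)\,n^{s}+o(n^{s}).
\]
Multiplying by the prefactor $(4\pi)^{s/2}\Gamma(s/2)n^{-s}$, the power of $n$ cancels against $n^{s}$, the remainder becomes $o(1)$, and collecting $(4\pi)^{s/2}=2^{s}\pi^{s/2}$ against $2^{1-s}\pi^{-s}$ yields
\[
\lim_{n\to\infty}h_n(s)=2\,\pi^{-s/2}\Gamma(s/2)\zeta(s)=:2\,\Lambda(s),
\]
where $\Lambda(s)=\pi^{-s/2}\Gamma(s/2)\zeta(s)$ is the completed Riemann zeta function, satisfying $\Lambda(s)=\Lambda(1-s)$. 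Since $0<\mathrm{Re}(s)<1$ implies $0<\mathrm{Re}(1-s)<1$ as well, the same asymptotics applies verbatim to $h_n(1-s)$.

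It then follows that $h_n(s)\to 2\Lambda(s)$ and $h_n(1-s)\to 2\Lambda(1-s)=2\Lambda(s)$ tend to the \emph{same} limit. In the critical strip $\Gamma(s/2)$ has neither zeros nor poles and $\pi^{-s/2}\neq 0$, so $\Lambda(s)\neq 0$ is equivalent to $\zeta(s)\neq 0$; under that hypothesis the common limit is nonzero and
\[
\lim_{n\to\infty}\left|\frac{h_n(1-s)}{h_n(s)}\right|=\left|\frac{\Lambda(1-s)}{\Lambda(s)}\right|=1,
\]
which is the claim.

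The computation is essentially bookkeeping of constants; the one genuinely delicate point is exactly the hypothesis $\zeta(s)\neq 0$, and this is where the obstacle to the full conjecture lies. At a nontrivial zero $s_0$ the leading terms of both $h_n(s_0)$ and $h_n(1-s_0)$ vanish, so the limit becomes the indeterminate $0/0$ and the two-term asymptotics no longer decides the ratio. Resolving the conjecture there would require the finer expansion of Theorem~\ref{thm:d=00003D1} (the $\tfrac{s}{3}\pi^{2-s}\zeta(s-2)n^{s-2}$ term and further corrections) together with precise control of the relative decay rates at the paired zeros $s_0$ and $1-s_0$; this is precisely why the corollary is stated only away from the zero set of $\zeta$.
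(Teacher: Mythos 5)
Your proof is correct and takes essentially the same route as the paper: the paper likewise rewrites the asymptotics of Theorem \ref{thm:d=00003D1} as $h_{n}(s)=2\xi(s)+\alpha(s)n^{-2}+o(n^{-2})$ with $\xi(s)=\pi^{-s/2}\Gamma(s/2)\zeta(s)$, and deduces the corollary immediately from $\xi(s)=\xi(1-s)$ together with $\xi(s)\neq0$ whenever $\zeta(s)\neq0$ in the strip. The only (immaterial) difference is that you invoke the coarser $o(n^{s})$ form of the expansion, whereas the paper carries along the $\alpha(s)n^{-2}$ correction it needs later at the zeros --- exactly the point your closing paragraph correctly identifies.
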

So the question is whether it also holds at the Riemann zeros. Note
that, as discussed $\zeta_{\mathbb{Z}}(s/2)$ has a functional equation
of the desired type, $s\longleftrightarrow1-s$, and also $\zeta_{\mathbb{Z}/n\mathbb{Z}}(s/2)$
in an asymptotic sense\emph{, }see\emph{ }section \ref{sec:Approximative-functional-equations}.
Here is the relation to the Riemann hypothesis:
\begin{thm}
\label{thm:conjectureRH}The conjecture is equivalent to the Riemann
hypothesis.
\end{thm}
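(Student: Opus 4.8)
The plan is to turn the conjecture into a statement about the values of $\zeta$ at its nontrivial zeros by extracting the first two terms of the large-$n$ asymptotics of $h_n$. Substituting the precise critical-strip expansion of Theorem \ref{thm:d=00003D1} into the definition of $h_n(s)$ and using that the term of order $n$ is cancelled exactly by $n\zeta_{\mathbb{Z}}(s/2)$, I would obtain, for $0<\mathrm{Re}(s)<1$,
\[
h_n(s)=2\Lambda(s)+C(s)\,n^{-2}+o(n^{-2}),
\]
where $\Lambda(s)=\pi^{-s/2}\Gamma(s/2)\zeta(s)$ is the completed Riemann zeta function, so that $\Lambda(s)=\Lambda(1-s)$, and $C(s)=\tfrac{s}{3}\pi^{2-s/2}\Gamma(s/2)\zeta(s-2)$. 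A short rearrangement of the gamma factors puts this in the form $C(s)=\tfrac{\pi}{6}\,s(s-2)\,\Lambda(s-2)$. Because $h_n$ is assembled from real Dirichlet-type sums and gamma factors, it obeys the reality relation $h_n(\bar s)=\overline{h_n(s)}$, which I will use throughout.

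I would then split into two cases. If $\zeta(s)\neq0$ then $\Lambda(s)\neq0$, the leading term dominates both numerator and denominator, and $|h_n(1-s)/h_n(s)|\to|\Lambda(1-s)/\Lambda(s)|=1$ by the functional equation; this is Corollary \ref{cor:The-conjecture-holds}. Hence the conjecture can only be in doubt at a nontrivial zero $\rho$, where $\Lambda(\rho)=0$ and the leading term drops out, leaving $h_n(\rho)=C(\rho)n^{-2}+o(n^{-2})$. Here $C(\rho)\neq0$: it carries the factor $\zeta(\rho-2)$, and $\rho-2$ has real part in $(-2,-1)$ with nonzero imaginary part, so it is neither a trivial nor a nontrivial zero of $\zeta$. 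The same holds at $1-\rho$, so the limit in the conjecture exists at $\rho$ and equals $|C(1-\rho)/C(\rho)|$. Thus the conjecture is equivalent to the statement that $|C(1-\rho)/C(\rho)|=1$ at every nontrivial zero $\rho$, and the theorem reduces to showing that this holds precisely when $\mathrm{Re}(\rho)=\tfrac12$.

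One implication is then immediate. If $\rho=\tfrac12+i\gamma$ lies on the critical line, then $1-\rho=\bar\rho$, so the reality relation gives $|h_n(1-\rho)|=|h_n(\bar\rho)|=|h_n(\rho)|$, and the ratio equals $1$ for all large $n$ (those for which $h_n(\rho)\neq0$); hence the Riemann hypothesis implies the conjecture. For the converse I argue contrapositively. Using $C(s)=\tfrac{\pi}{6}s(s-2)\Lambda(s-2)$ and applying $\Lambda(w)=\Lambda(1-w)$ to each factor yields the clean identity
\[
\frac{C(1-s)}{C(s)}=\frac{(s-1)(s+1)}{s(s-2)}\,\frac{\Lambda(2+s)}{\Lambda(3-s)} .
\]
On the line $\mathrm{Re}(s)=\tfrac12$ the arguments $2+s$ and $3-s$ are complex conjugates and the rational prefactor has modulus $1$, re-confirming the first implication; off the line these two symmetries break, and the task is to prove that the modulus is then genuinely $\neq1$.

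This last inequality is the main obstacle. I would study $\log|C(1-\rho)/C(\rho)|$ via the real part of its logarithmic derivative in the horizontal direction, namely $\mathrm{Re}\big(\Lambda'/\Lambda(2+s)+\Lambda'/\Lambda(3-s)\big)$ plus the contribution of the rational factor. Since $2+\rho$ and $3-\rho$ have real part larger than $2$, the terms $\mathrm{Re}(\zeta'/\zeta)$ are governed by the absolutely convergent Dirichlet series, while Stirling's formula for the gamma factors supplies the dominant piece. This gives $\log|C(1-\rho)/C(\rho)|\sim(\mathrm{Re}(\rho)-\tfrac12)\log(|\gamma|/2\pi)$, whose sign is that of $\mathrm{Re}(\rho)-\tfrac12$; the modulus is therefore $\neq1$ for every off-line zero of sufficiently large height, so the conjecture fails there. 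I expect the genuinely delicate point to be the range of bounded height, where the gamma and $\zeta'/\zeta$ contributions are of comparable size: closing the argument there requires the sharp estimates on $\mathrm{Re}(\zeta'/\zeta)$ indicated in the introduction (together, if necessary, with the classical fact that the low-lying zeros lie on the critical line), and this is where I would concentrate the effort to conclude that $|C(1-\rho)/C(\rho)|=1$ forces $\mathrm{Re}(\rho)=\tfrac12$.
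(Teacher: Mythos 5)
Your architecture coincides with the paper's: the expansion $h_n(s)=2\xi(s)+\alpha(s)n^{-2}+o(n^{-2})$ (your $C$ is the paper's $\alpha$), the reduction of the conjecture at a zero $\rho$ --- where $1-\rho$ is also a zero and $\alpha(\rho)\neq0$ --- to the single equality $|\alpha(1-\rho)|=|\alpha(\rho)|$ (this is Lemma \ref{lem:functionalequation}; you even make explicit the nonvanishing $\zeta(\rho-2)\neq0$ that the paper leaves implicit), and the direction from the Riemann hypothesis to the conjecture via conjugation symmetry (the paper uses $\alpha(\overline{s})=\overline{\alpha(s)}$ so that $\alpha(1-s)=\overline{\alpha(s)}$ on the critical line; your relation $h_n(\overline{s})=\overline{h_n(s)}$ is the same point one level up). Your identity $C(1-s)/C(s)=\frac{(s-1)(s+1)}{s(s-2)}\,\Lambda(2+s)/\Lambda(3-s)$ is, after Euler reflection, exactly the paper's $|\alpha(1-s)/\alpha(s)|=|\zeta(s+2)(s-1)(s+1)|/\bigl(4\pi^{2}|\zeta(s-2)|\bigr)$, and your form has a genuine advantage worth noting: both zeta arguments lie in $\mathrm{Re}>2$, where $\mathrm{Re}(\zeta'/\zeta)$ is controlled by the absolutely convergent series $\sum\Lambda(n)n^{-\sigma'}$, whereas the paper, keeping $\zeta(s-2)$ with $\mathrm{Re}(s-2)\in(-2,-1)$, must control $\mathrm{Re}\bigl(\zeta'(s-2)/\zeta(s-2)\bigr)$ through the Mittag--Leffler expansion of $\tilde{\xi}'/\tilde{\xi}$ over the zeros.

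The decisive step, however --- that $|C(1-\rho)/C(\rho)|\neq1$ at every off-line zero --- is not proved, and the form in which you state it would not suffice even if it were. From $\log|C(1-\rho)/C(\rho)|\sim(\mathrm{Re}(\rho)-\tfrac12)\log(|\gamma|/2\pi)$ you cannot read off the sign: an off-line zero may have $\mathrm{Re}(\rho)$ arbitrarily close to $\tfrac12$, so the asymptotic only helps if its error is uniformly of the form $(\mathrm{Re}(\rho)-\tfrac12)\cdot o(\log|\gamma|)$. What is actually needed is the monotonicity statement: for fixed $t$ with $|t|$ above an explicit height, $\partial_\sigma\log|C(1-s)/C(s)|=\mathrm{Re}\bigl(\Lambda'/\Lambda(2+s)+\Lambda'/\Lambda(3-s)\bigr)+O(t^{-2})>0$ on all of $0<\sigma<1$; since the function vanishes at $\sigma=\tfrac12$ by the conjugation symmetry, strict monotonicity pins the equality locus to the critical line. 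This is precisely the content of the paper's Lemma \ref{lem:Riemann} combined with the strict decrease of $4\pi^{2}/|s^{2}-1|$, and the paper carries it out with explicit constants: $|\mathrm{Re}(\zeta'(s+2)/\zeta(s+2))|\le-\zeta'(2)/\zeta(2)<0.57$, $\mathrm{Re}(1/(s-3))>-0.03$, $-\log\pi>-1.2$, and Stirling giving $\mathrm{Re}\,\psi(s/2)\ge2.56$ for $|t|\ge26$; the remaining range $|t|\le26$ is then settled by the classical verification that those zeros lie on the critical line. These are exactly the two ingredients you name but explicitly defer (``this is where I would concentrate the effort''), so what you have is a correct skeleton with the right tools identified, while the theorem's actual content --- the uniform positivity with an explicit height, plus the low-height verification --- remains open; executing it in your $\Lambda(2+s)/\Lambda(3-s)$ formulation would, if anything, be somewhat simpler than the paper's route, since it bypasses the zero-sum positivity argument entirely.
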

Section \ref{sec:The-Riemann-hypothesis} is devoted to the proof
of this statement. This relies in particular on properties of the
logarithmic derivative of $\zeta$, in the proof of Lemma \ref{lem:Riemann},
and the Riemann functional equation.

A referee pointed out that it is important to emphasize that the last
few results in one dimension hold with the same proofs for more general
sums, instead of the inverse sine sums coming from cyclic graphs.
More precisely, let $f$ be an analytic function being real and positive
on the open interval $(0,1)$, satisfying $f(z)=f(1-z)$ for any $z\in\mathbb{C}$,
with $f(0)=0,$ $f'(0)>0,$ $f''(0)=0$ and $f^{(3)}(0)\neq0$.

Now let for $0<Re(s)<1$ \[
h_{n}[f](s)=f'(0)^{s}\pi^{-s/2}\Gamma(s/2)n^{-s}\left[\sum_{j=1}^{n-1}\frac{1}{f(j/n)^{s}}-n\int_{0}^{1}\frac{dx}{f(x)^{s}}\right].\]
As in section \ref{sec:dimension one} applying \cite{Si04} one gets\[
h_{n}[f](s)=2\xi(s)-\frac{f^{(3)}(0)}{f'(0)\pi^{2}}\alpha(s)n^{-2}+o(n^{-2})\]
as $n\rightarrow\infty$ and where $\alpha$ is the function appearing
in section \ref{sec:The-Riemann-hypothesis}. So one may formulate
the same conjecture above, and Corollary \ref{cor:The-conjecture-holds}
and Theorem \ref{thm:conjectureRH} hold for $h_{n}[f]$.

\subsection*{Some concluding remarks. }

Why do we think that the study of sums like \[
\sum_{k=1}^{n-1}\frac{1}{\sin^{s}(\pi k/n)}\]
could in some ways be better than the standard Dirichlet series $\sum_{1}^{n}k^{-s}$,
or some other sum of similar type for that matter? For example, it
has been pointed out to us that we could also derive version of Theorems
\ref{thm:d=00003D1} and \ref{thm:conjectureRH} for more general
functions, as described above, for example replacing sine with $x-2x^{3}+x^{4}=x(1-x)(1+x-x^{2})$$.$
In this case the function corresponding to our $\zeta_{\mathbb{Z}}(s)$,
say in the definition of $h_{n}$, would be\[
\int_{0}^{1}\frac{1}{x^{2s}(1-x)^{2s}(1+x-x^{2})^{2s}}dx,\]
which is a less standard function.

Let us now address this legitimate question with several answers that
reinforce each other:
\begin{enumerate}
\item The graph zeta functions are defined in a parallel way to the definition
of Riemann's zeta. Functions arising in this way may have greater
chance to have more symmetries and structure, for example, keep in
mind the remarkable relation\[
\xi_{\mathbb{Z}}(1-s)=\xi_{\mathbb{Z}}(s),\]
which is far from being just an abstract generality. On the other
hand, it is highly unclear whether the integral above satisfies an
analog of this. The funtional equation for $\zeta_{\mathbb{Z}}$ leads
to, see section \ref{sec:Approximative-functional-equations}, an
asymptotic functional relation of the desired type for the completed
finite $1/\sin$ sums: \[
\lim_{n\rightarrow\infty}\frac{1}{n}\left(\xi_{\mathbb{Z}/n\mathbb{Z}}(1-s)-\xi_{\mathbb{Z}/n\mathbb{Z}}(s)\right)=0\]
 in the critical strip. We do not see a similar relation for, say\[
\sum_{k=1}^{n-1}\frac{1}{\left((k/n)(1-k/n)(1+k/n-k^{2}/n^{2}\right)^{2s}}.\]
Since relations when $s\longleftrightarrow1-s$ is at the heart of
the matter for our reformulation of the Riemann Hypothesis, this may
be a certain advantage respectively disadvantage for the choices of
finite sums to consider.
\item The function $\zeta_{\mathbb{Z}}$ admits analytic continuation, functional
equation and a few nice special values. Furthermore, it appears in
the theory of Eisenstein series as observed above in a way that is
difficult to deny, and in our opinon, unwise to dismiss. 
\item Symmetric functions of graph eigenvalues often have combinatorial
interpretations as counting something (starting with Kirchhoff's matrix
tree theorem), see our section \ref{sub:special-combinatorics} for
a small illustration. So independently of number theory, our graph
zeta functions deserve further study. In particular, the analogous
functions for manifolds play a role in various branches of mathematical
physis. In this connection, Theorem \ref{thm:Zd} is of definite interest,
see the comments after this theorem.
\item It is also noteworthy to recall that for $s=2m$, the even positive
integers, our finite sums admit a closed form expression as a polynomial
in $n$, for example (which can be shown combinatorially in line with
the previous point), \[
\sum_{k=1}^{n-1}\frac{1}{\sin^{2}(\pi k/n)}=\frac{1}{3}n^{2}-\frac{1}{3},\]
while $\sum_{1}^{n}k^{-2m}$ does not admit such a formula. The sine
series evaluation implies, in view of (\ref{eq:zetalimit}) and (\ref{eq:Riemannzeta})
above, Euler's formulas for $\zeta(2m)$, for example $\zeta(2)=\pi^{2}/6$.
See section \ref{sec:Special-values} for more about how our asymptotical
relations imply known special values, and also references to contexts
where the finite $1/\sin$ sums are studied.
\end{enumerate}
\textbf{Higher dimensions. }For $d>1$ the torus zeta functions are
Epstein zeta functions also appearing in number theory. Some of these
are known not to satisfy the Riemann hypothesis, the statement that
all non-trivial zeros lie on one vertical line (see \cite{RH07} and
\cite{PT34}). It seems interesting to understand this difference
between $d=1$ and certain higher dimensional cases from our perspective.
Theorem \ref{thm:Zd} gives precise asymptotics in higher dimensions,
but to get even further terms in the expansion, as in Theorem \ref{thm:d=00003D1},
there are some complications, especially when trying to assemble a
nice expression, like $\zeta(s-2)$ as in Theorem \ref{thm:d=00003D1}.
Therefore this is left for future study.

\textbf{Generalized Riemann Hypothesis (GRH). }In a forthcoming sequel
about Dirichlet $L$-functions \cite{F15}, by the first-named author,
it similarily emerges that the GRH is essentially equivalent to an
expected asymptotic functional equation of the corresponding graph
$L$-function. More precisely, spectral L-functions for graphs (different
from those considered in \cite{H92} and \cite{STe00}) are introduced,
and in the case of $\mathbb{Z}/n\mathbb{Z}$, the $L$-functions completed
with suitable fudge factors, and denoted $\Lambda_{n}(s,\chi)$, satisfy\[
\lim_{n\rightarrow\infty}\left|\frac{\Lambda_{n}(s,\chi)}{\Lambda_{n}(1-s,\overline{\chi})}\right|=1,\]
for $0<Re(s)<1$ and $Im(s)\geq8$, if and only if the GRH holds (for
$s$ in the same range) for Dirichlet's $L$-function $L(s,\chi)$. 

\textbf{Zeta functions of graphs. }As recalled in the beginning, the
more standard zeta function of a graph is the one going back to a
paper by Ihara. Ihara zeta functions for infinite graphs appear in
a few places, three recent papers are \cite{D14,CJK15,LPS14}, which
contain further generalizations and where references to papers by
Grigorchuk-Zuk and Guido, Isola, and Lapidus on this topic can be
found. A two variable extension of the Ihara zeta function was introduced
by Bartholdi \cite{B99} developed out of a formula in \cite{G78}.
Zeta functions more closely related to the ones considered in the
present paper, are the spectral zeta functions of fractals in works
by Teplyaev, Lapidus and van Frankenhuijsen.

\textbf{Acknowledgement. }The second-named author thanks Jay Jorgenson,
Pär Kurlberg and Andreas Strömbergsson for valuable discussions related
to this paper. We thank Franz Lehner for suggesting the use of the
spectral measure in the calculation of the spectral zeta function
of regular trees. We thank the referee for insightful comments.

\section{Spectral zeta functions\label{sec:Spectral-zeta-functions}}

At least since Carleman \cite{Ca34} one forms a spectral zeta function
\[
\sum_{j}\frac{1}{\lambda_{j}^{s}}\]
over the set of non-zero Laplace eigenvalues, convergent for $s$
in some right half-plane. For a finite graph the elementary symmetric
functions in the eigenvalues admit a combinatorial interpretation
starting with Kirchhoff, see \emph{e.g.} \cite{CL96} for a more recent
discussion. For infinite graphs or manifolds one does at least not
a priori have such symmetric functions (since the spectrum may be
continuous or the eigenvalues are infinite in number). This is one
reason for defining spectral zeta functions, since these are symmetric,
and via transforms one can get the analytic continued intepretations
of the elementary symmetric functions, such as the (restricted) determinant.
As has been recognized at least for the determinant, the combinatorial
intepretation persists in a certain sense, see \cite{Ly10}. 

As often is the case, since Riemann, in order to define its meromorphic
continuation one writes the zeta function as the Mellin transform
of the associated theta series, or trace of the heat kernel. For this
reason and in view of that some spaces have no eigenvalues but continuous
spectrum, a case important to us in this paper, we suggest (as advocated
by Jorgenson-Lang, see for example \cite{JL12}) to start from the
heat kernel to define spectral zeta functions. Recall that the Mellin
transform of a function $f(t)$ is\[
\mathbf{M}f(s)=\int_{0}^{\infty}f(t)t^{s}\frac{dt}{t}.\]
For example when $f(t)=e^{-t}$, the transform is $\Gamma(s)$.

More precisely, for a finite or compact space $X$ we can sum over
$x_{0}$ of the unique bounded fundamental solution $K_{X}(t,x_{0},x_{0})$
of the heat equation (see for example \cite{JL12,CJK10} for more
background on this), which gives the heat trace $Tr(K_{X})$, typically
on the form $\sum e^{-\lambda t}$, and define\[
\zeta_{X}(s)=\frac{1}{\Gamma(s)}\int_{0}^{\infty}(Tr(K_{X})-1)t^{s}\frac{dt}{t}.\]

When the spectrum is discrete this formula gives back Carleman's definition
above. For a non-compact space with a heat kernel independent of the
point $x_{0}$, for example a Cayley graph of an infinite, finitely
generated group, it makes sense to take Mellin transform of $K_{X}(t,x_{0},x_{0})$
without the trace. Moreover since zero is no longer an eigenvalue
for the Laplacian acting on $L^{2}(X)$ we should no longer subtract
$1$, so the definition in this case is\[
\zeta_{X}(s)=\frac{1}{\Gamma(s)}\int_{0}^{\infty}K_{X}(t,x_{0},x_{0})t^{s}\frac{dt}{t}.\]

Let us also note that in the graph setting as shown in \cite{CJK15},
it holds that if we start with the heat kernel one may via instead
a Laplace transform obtain the Ihara zeta function and the fundamental
determinant formula.

An alternative, equivalent, definition is given by the spectral measure
$d\mu=d\mu_{x_{0},x_{0}}$ , see \cite{MW89}, \[
\zeta_{X}(s)=\int\lambda^{-s}d\mu(\lambda).\]

Here and in the next two sections we provide some examples:

\noun{Example. }For a finite torus graph defined as in the introduction
we have by calculating the eigenvalues (see for example \cite{CJK10})
\[
\zeta_{\mathbb{Z}^{d}/A\mathbb{Z}^{d}}(s)=\frac{1}{2^{2s}}\sum_{k}\frac{1}{\left(\sin^{2}(\pi k_{1}/a_{1})+...+\sin^{2}(\pi k_{d}/a_{d})\right)^{s}},\]
where the sum runs over all $0\leq k_{i}\leq a_{i}-1$ except for
all $k_{i}$s being zero.

\noun{Example. }For real tori we have again by calculating the eigenvalues
(see \cite{CJK12}) as is well known \[
\zeta_{\mathbb{R}^{d}/A\mathbb{Z}^{d}}(s)=\frac{1}{(2\pi)^{2s}}\sum_{k\in\mathbb{Z}^{d}\setminus\left\{ 0\right\} }\frac{1}{\left\Vert A^{*}k\right\Vert ^{2s}},\]
where $A^{*}=\left(A^{-1}\right)^{t}$. 

In the following sections we will discuss the zeta function of some
infinite graphs, namely the standard lattice graphs $\mathbb{Z}^{d}$.
Before that let us mention yet another example, that we again do not
think one finds in the literature.

\noun{Example. }The $(q+1)$-regular tree $T_{q+1}$ with $q\geq2$
is a fundamental infinite graph ($q=1$ corresponds to $\mathbb{Z}$
treated in the next section). Also here the spectral measure is well-known,
our reference is \cite{MW89}. Thus\[
\zeta_{T_{q+1}}(s)=\int_{-2\sqrt{q}}^{2\sqrt{q}}\frac{1}{(q+1-\lambda)^{s}}\frac{(q+1)}{2\pi}\frac{\sqrt{4q-\lambda^{2}}}{((q+1)^{2}-\lambda^{2})}d\lambda=\]

\[
=\frac{q+1}{2\pi}\int_{-2\sqrt{q}}^{2\sqrt{q}}\frac{1}{(q+1-\lambda)^{s+1}}\frac{\sqrt{4q-\lambda^{2}}}{(q+1+\lambda)}d\lambda.\]
We change variable $u=2\sqrt{q}-\lambda.$ So\[
\zeta_{T_{q+1}}(s)=\frac{q+1}{2\pi}\int_{0}^{4\sqrt{q}}\frac{1}{(q+1-2\sqrt{q}+u)^{s+1}}\frac{\sqrt{4\sqrt{q}u-u^{2}}}{(q+1+2\sqrt{q}-u)}du=\]
\[
=\frac{q+1}{2\pi}\int_{0}^{4\sqrt{q}}\frac{u^{1/2}}{(q+1-2\sqrt{q}+u)^{s+1}}\frac{\sqrt{4\sqrt{q}-u}}{(q+1+2\sqrt{q}-u)}du.\]
We change again: $u=4\sqrt{q}t,$ so \[
\zeta_{T_{q+1}}(s)=\frac{q+1}{2\pi}\int_{0}^{1}\frac{(4\sqrt{q})^{1/2}t^{1/2}}{(q+1-2\sqrt{q}+4\sqrt{q}t)^{s+1}}\frac{\sqrt{4\sqrt{q}-4\sqrt{q}t}}{(q+1+2\sqrt{q}-4\sqrt{q}t)}4\sqrt{q}dt=\]

\[
=\frac{d}{2\pi}\frac{16q}{(q+1-2\sqrt{q})^{s+1}(q+1+2\sqrt{q})}\int_{0}^{1}\frac{t^{1/2}\sqrt{1-t}}{(1-ut)^{s+1}(1-vt)}dt,\]
where $u=-4\sqrt{q}/(q+1-2\sqrt{q})$ and $v=4\sqrt{q}/(q+1+2\sqrt{q})$.
This is an Euler-type integral that Picard considered in \cite{Pi1881}
and which lead him to Appell's hypergeometric function $F_{1}$, \[
\zeta_{T_{q+1}}(s)=\frac{q+1}{2\pi}\frac{16q}{(q+1-2\sqrt{q})^{s+1}(q+1+2\sqrt{q})}\frac{\Gamma(3/2)\Gamma(3/2)}{\Gamma(3)}F_{1}(3/2,s+1,1,3;u,v).\]
Simplifing this somewhat we have proved:
\begin{thm}
For $q>1,$ the spectral zeta function of the $(q+1)$-regular tree
is\[
\zeta_{T_{q+1}}(s)=\frac{q(q+1)}{(q-1)^{2}(\sqrt{q}-1)^{2s}}F_{1}(3/2,s+1,1,3;u,v),\]
with $u=-4\sqrt{q}/(\sqrt{q}-1)^{2}$ and $v=4\sqrt{q}/(\sqrt{q}+1)^{2}$,
and where $F_{1}$ is one of Appell's hypergeometric functions. 
\end{thm}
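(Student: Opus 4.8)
The plan is to recognize that the theorem is essentially an assembly and simplification of the chain of integral manipulations already carried out in the preceding Example, so the proof is a matter of (i) justifying each change of variable rigorously, and (ii) performing the algebraic simplification that passes from the penultimate displayed formula to the claimed closed form. First I would establish the starting point: the spectral measure for the $(q+1)$-regular tree (the Kesten--McKay measure), supported on $[-2\sqrt q, 2\sqrt q]$, giving $\zeta_{T_{q+1}}(s)$ as the stated integral against $(q+1-\lambda)^{-s}$. Here the restriction $q>1$ guarantees that $q+1-\lambda \geq q+1-2\sqrt q = (\sqrt q - 1)^2 > 0$ on the support, so the integrand has no singularity and the integral is well-defined for all $s$ where it converges; I would note the region of convergence in $s$ (the integrand is integrable near the endpoints $\lambda = \pm 2\sqrt q$ because of the $\sqrt{4q-\lambda^2}$ factor, which behaves like a square root at the branch points).

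Next I would carry out the two affine substitutions $u = 2\sqrt q - \lambda$ and then $u = 4\sqrt q\,t$, exactly as displayed, tracking how the measure $\sqrt{4q-\lambda^2}\,d\lambda$ transforms. The key identity is $4q - \lambda^2 = (2\sqrt q - \lambda)(2\sqrt q + \lambda)$, which under $u = 2\sqrt q - \lambda$ becomes $u(4\sqrt q - u)$, producing the $u^{1/2}\sqrt{4\sqrt q - u}$ numerator; after rescaling by $t$ this yields the Beta-type weight $t^{1/2}(1-t)^{1/2}$ on $[0,1]$ together with the two denominators $(1-ut)^{s+1}$ and $(1-vt)$, where $u = -4\sqrt q/(q+1-2\sqrt q)$ and $v = 4\sqrt q/(q+1+2\sqrt q)$. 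Recognizing that $q+1 \mp 2\sqrt q = (\sqrt q \mp 1)^2$ then puts $u$ and $v$ into the form stated in the theorem.

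The heart of the argument is then the identification of the resulting integral with Appell's $F_1$. I would invoke Picard's Euler-type integral representation (the reference \cite{Pi1881}),
\[
\int_0^1 t^{b_1-1}(1-t)^{c-b_1-1}(1-ut)^{-a_1}(1-vt)^{-a_2}\,dt
= \frac{\Gamma(b_1)\Gamma(c-b_1)}{\Gamma(c)}\,F_1(b_1,a_1,a_2,c;u,v),
\]
and match parameters: $b_1 = 3/2$ (from $t^{1/2}$, so $b_1 - 1 = 1/2$), $c - b_1 - 1 = 1/2$ giving $c = 3$, $a_1 = s+1$ (from the $(1-ut)^{s+1}$ denominator), and $a_2 = 1$ (from $(1-vt)$). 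This produces the factor $\Gamma(3/2)\Gamma(3/2)/\Gamma(3)$, which together with the prefactor $\tfrac{q+1}{2\pi}\cdot \tfrac{16q}{(q+1-2\sqrt q)^{s+1}(q+1+2\sqrt q)}$ I would simplify. Using $\Gamma(3/2) = \tfrac{\sqrt\pi}{2}$ and $\Gamma(3) = 2$ gives $\Gamma(3/2)^2/\Gamma(3) = \pi/8$, which cancels the $2\pi$ and the $16$ cleanly; substituting the perfect-square forms for the denominators and collecting the factor $(\sqrt q - 1)^{-2(s+1)}$ against one power of $(\sqrt q - 1)^2$ yields the final normalization $\tfrac{q(q+1)}{(q-1)^2(\sqrt q -1)^{2s}}$.

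The main obstacle I anticipate is not any single step but rather the bookkeeping in the final simplification, where one must confirm that the exponent of $(\sqrt q - 1)$ collapses from $2(s+1)$ to $2s$ and that the remaining rational prefactor in $q$ genuinely reduces to $q(q+1)/(q-1)^2$ — note $(q-1)^2 = (\sqrt q - 1)^2(\sqrt q + 1)^2$, which is precisely the combination $(q+1-2\sqrt q)(q+1+2\sqrt q)$ appearing in the denominator, so this is where the $(\sqrt q + 1)^2$ from $q+1+2\sqrt q$ gets absorbed. A secondary point requiring care is the precise range of $s$ and convergence of Picard's integral: the representation is classically valid for $\mathrm{Re}(c) > \mathrm{Re}(b_1) > 0$ (here $3 > 3/2 > 0$, automatically satisfied) and for $u,v$ outside $[1,\infty)$, so I would verify $u < 0 < v < 1$, which holds since $v = 4\sqrt q/(\sqrt q + 1)^2 < 1$ for all $q \neq 1$; the $F_1$ series and its continuation then supply the meromorphic extension in $s$.
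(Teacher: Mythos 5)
Your proposal is correct and follows essentially the same route as the paper: the spectral-measure definition of $\zeta_{T_{q+1}}(s)$, the two affine substitutions $u=2\sqrt{q}-\lambda$ and $u=4\sqrt{q}\,t$ leading to the integral $\int_0^1 t^{1/2}(1-t)^{1/2}(1-ut)^{-(s+1)}(1-vt)^{-1}\,dt$, identification via Picard's Euler-type integral for Appell's $F_1$ with parameters $(3/2,\,s+1,\,1,\,3)$, and the final simplification using $\Gamma(3/2)^2/\Gamma(3)=\pi/8$ and $q+1\mp 2\sqrt{q}=(\sqrt{q}\mp 1)^2$. Your added attention to convergence and to the validity conditions $u<0<v<1$ for Picard's representation is a modest strengthening of the paper's computation, not a different method.
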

The topic of functional relations between hypergeometric functions
is a very classical one. In spite of the many known formulas, we were
not able to derive a functional equation for $\zeta_{T_{q+1}}$ with
$s\longleftrightarrow1-s$.

\section{The spectral zeta function of the graph $\mathbb{Z}$ \label{sec:zeta of Z}}

The heat kernel of $\mathbb{Z}$ is $e^{-2t}I_{x}(2t)$ where $I_{x}$
is a Bessel function (see \cite{CJK10} and its references). Therefore

\[
\zeta_{\mathbb{Z}}(s)=\frac{1}{\Gamma(s)}\int_{0}^{\infty}e^{-2t}I_{0}(2t)t^{s}\frac{dt}{t},\]
which converges for $0<Re(s)<1/2$. It is not so clear why this function
should have a meromorphic continuation and functional equation very
similar to Riemann's zeta. 
\begin{prop}
\label{pro:beta}For $0<Re(s)<1/2$ it holds that\[
\zeta_{\mathbb{Z}}(s)=\frac{1}{4^{s}\sqrt{\pi}}\frac{\Gamma(1/2-s)}{\Gamma(1-s)}=\frac{1}{4^{s}\pi}B(1/2,1/2-s),\]
where $B$ denotes Euler's beta function. This formula provides the
meromorphic continuation of $\zeta_{\mathbb{Z}}(s)$.\end{prop}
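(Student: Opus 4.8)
The plan is to evaluate the Mellin integral directly by inserting the classical integral representation of the Bessel function $I_0$ and reducing everything to a beta integral. First I would write $I_0(2t)=\frac{1}{\pi}\int_0^\pi e^{2t\cos\theta}\,d\theta$, so that $e^{-2t}I_0(2t)=\frac{1}{\pi}\int_0^\pi e^{-2t(1-\cos\theta)}\,d\theta$. Substituting this into the defining integral for $\zeta_{\mathbb{Z}}(s)$ and interchanging the order of integration gives
\[
\zeta_{\mathbb{Z}}(s)=\frac{1}{\Gamma(s)}\cdot\frac{1}{\pi}\int_0^\pi\left(\int_0^\infty e^{-2t(1-\cos\theta)}t^{s}\frac{dt}{t}\right)d\theta.
\]
The inner $t$-integral is an elementary Gamma integral, equal to $\Gamma(s)\,[2(1-\cos\theta)]^{-s}$, and the factor $\Gamma(s)$ cancels the prefactor, leaving $\zeta_{\mathbb{Z}}(s)=\frac{1}{\pi}\int_0^\pi[2(1-\cos\theta)]^{-s}\,d\theta$.

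Next I would use the half-angle identity $2(1-\cos\theta)=4\sin^2(\theta/2)$ together with the substitution $\phi=\theta/2$ to turn this into $\frac{2\cdot 4^{-s}}{\pi}\int_0^{\pi/2}\sin^{-2s}\phi\,d\phi$. This last integral is a standard beta integral: writing $\int_0^{\pi/2}\sin^{2a-1}\phi\,\cos^{2b-1}\phi\,d\phi=\tfrac12 B(a,b)$ with $a=\tfrac12-s$ and $b=\tfrac12$ gives $\int_0^{\pi/2}\sin^{-2s}\phi\,d\phi=\tfrac12 B(\tfrac12-s,\tfrac12)$. Substituting back and expressing $B$ in terms of Gamma functions, using $\Gamma(1/2)=\sqrt{\pi}$, yields exactly $\zeta_{\mathbb{Z}}(s)=\frac{1}{4^s\sqrt{\pi}}\frac{\Gamma(1/2-s)}{\Gamma(1-s)}=\frac{1}{4^s\pi}B(1/2,1/2-s)$, the second equality following from the symmetry of $B$. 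Since the right-hand side is a ratio of Gamma functions, which are meromorphic on all of $\mathbb{C}$, this identity immediately furnishes the claimed meromorphic continuation.

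The main point requiring care is the convergence bookkeeping and the justification of the interchange of integrals. I would check that the region $0<Re(s)<1/2$ is exactly where everything is legitimate: convergence of the $t$-integral near $t=0$ needs $Re(s)>0$ (since $e^{-2t}I_0(2t)\to 1$), while convergence near $t=\infty$ needs $Re(s)<1/2$ because $e^{-2t}I_0(2t)\sim (4\pi t)^{-1/2}$; correspondingly, the $\theta$-integral of $\sin^{-2s}(\theta/2)$ converges at $\theta=0$ precisely when $Re(s)<1/2$, and the beta integral requires $Re(\tfrac12-s)>0$. Thus the interchange of integration is justified by Fubini--Tonelli on the strip $0<Re(s)<1/2$ via absolute convergence, and all endpoint constraints collapse to the single condition $Re(s)<1/2$, consistent with the hypothesis. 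No genuinely hard step remains once the Bessel representation is in hand; the only real obstacle is to confirm these convergence thresholds so that the formal manipulation is rigorous.
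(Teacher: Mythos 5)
Your proof is correct, but it takes a different route from the paper. The paper's proof is essentially a one-line citation: it invokes formula 11.4.13 of Abramowitz--Stegun, which gives the Mellin transform
\[
\mathbf{M}\bigl(e^{-t}I_{x}(t)\bigr)(s)=\frac{\Gamma(s+x)\,\Gamma(1/2-s)}{2^{s}\pi^{1/2}\,\Gamma(1+x-s)},
\qquad Re(s)<1/2,\ Re(s+x)>0,
\]
specializes to $x=0$, rescales $t\mapsto 2t$, and then rewrites the Gamma quotient as a beta function using $\Gamma(1/2)=\sqrt{\pi}$. You instead prove the transform from scratch: inserting $I_{0}(2t)=\frac{1}{\pi}\int_{0}^{\pi}e^{2t\cos\theta}\,d\theta$, applying Fubini--Tonelli (legitimate, since for $s=\sigma+it$ with $0<\sigma<1/2$ the modulus of the integrand is the positive integrand at $\sigma$, whose double integral is finite), evaluating the inner Gamma integral, and reducing via $2(1-\cos\theta)=4\sin^{2}(\theta/2)$ to Euler's beta integral $\int_{0}^{\pi/2}\sin^{-2s}\phi\,d\phi=\tfrac12 B(1/2-s,1/2)$. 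Each step checks out, including your convergence bookkeeping at both endpoints. What your derivation buys is self-containedness and transparency: your intermediate identity $\zeta_{\mathbb{Z}}(s)=\frac{1}{\pi}\int_{0}^{\pi}\bigl(4\sin^{2}(\theta/2)\bigr)^{-s}d\theta$ is exactly the spectral-measure form $\zeta_{X}(s)=\int\lambda^{-s}\,d\mu(\lambda)$ that the paper mentions as an equivalent definition but does not use here, and it makes visible why the beta function appears and how this integral is the $n\to\infty$ analogue of the finite $1/\sin$ sums for $\mathbb{Z}/n\mathbb{Z}$; the paper's citation is shorter but leaves the analytic content inside a table lookup.
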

\begin{proof}
By formula 11.4.13 in \cite{AS64}, we have\[
\mathbf{M}(e^{-t}I_{x}(t))(s)=\frac{\Gamma(s+x)\Gamma(1/2-s)}{2^{s}\pi^{1/2}\Gamma(1+x-s)},\]
valid for $Re(s)<1/2$ and $Re(s+x)>0$. This implies the first formula.
Finally, using that $\Gamma(1/2)=\sqrt{\pi}$ and the definition of
the beta function the proposition is established.
\end{proof}
We proceed to determine a functional equation for this zeta function.
Recall that \[
\Gamma(z)\Gamma(1-z)=\frac{\pi}{\sin(\pi z)}.\]
Therefore\[
2^{s}\sqrt{\pi}\zeta_{\mathbb{Z}}(s/2)=\frac{\Gamma(1-(1/2+s/2))}{\Gamma(1-s/2)}=\frac{\sin(\pi s/2)\Gamma(s/2)}{\pi}\frac{\pi}{\sin(\pi(s+1)/2)\text{\ensuremath{\Gamma}(1/2+s/2)}}\]

\[
=\tan(\pi s/2)\frac{\Gamma(1/2-(1-s)/2)}{\Gamma(1-(1-s)/2)}=2^{1-s}\sqrt{\pi}\tan(\pi s/2)\zeta_{\mathbb{Z}}((1-s)/2).\]

Hence in analogy with Riemann's case we have\[
\zeta_{\mathbb{Z}}(s/2)=2^{1-2s}\tan(\pi s/2)\zeta_{\mathbb{Z}}((1-s)/2).\]
(The passage from $s$ to $s/2$ is also the same.) If we define the
completed zeta to be \[
\xi_{\mathbb{Z}}(s)=2^{s}\cos(\pi s/2)\zeta_{\mathbb{Z}}(s/2),\]
then one verifies that the above functional equation can be written
in the familiar more symmetric form\[
\xi_{\mathbb{Z}}(s)=\xi_{\mathbb{Z}}(1-s)\]
for all $s\in\mathbb{C}.$ Moreover, note that this is an entire function
since the simple poles coming from $\Gamma$ are cancelled by the
cosine zeros and it takes real values on the critical line. We call
$\xi_{\mathbb{Z}}$ the \emph{entire completion} of $\zeta_{\mathbb{Z}}$. 

Let us determine some special values. In view of that for integers
$n\geq0$, \[
\Gamma(1/2+n)=\frac{(2n)!}{4^{n}n!}\sqrt{\pi}\]
and $\Gamma(1+n)=n!$, we have for $s=-n$, \[
\zeta_{\mathbb{Z}}(-n)=\frac{1}{4^{-n}\sqrt{\pi}}\frac{\Gamma(1/2+n)}{\Gamma(1+n)}=\frac{(2n)!}{n!n!}=\left(\begin{array}{c}
2n\\
n\end{array}\right).\]
This number equals the number of paths of length $2n$ from the origin
to itself in $\mathbb{Z}$. 

Furthermore, in a similar way for $n\geq0$, \[
\zeta_{\mathbb{Z}}(-n+1/2)=\frac{1}{4^{-n}\sqrt{\pi}}\frac{\Gamma(n)}{\Gamma(1/2+n)}=\frac{4^{2n}}{2\pi n}\frac{n!n!}{(2n)!}=\frac{4^{2n}}{2\pi n\left(\begin{array}{c}
2n\\
n\end{array}\right)}.\]

It is well-known that the gamma function is a meromorphic function
in the whole complex plane with simple poles at the negative integers
and no zeros. Note that if we pass from $s$ to $s/2$ we have that
$\zeta_{\mathbb{Z}}(s/2)$ has simple poles at the positive odd integers,
and the special values determined above appear at the even negative
numbers.

We may thus summarize:
\begin{thm}
The spectral zeta function $\zeta_{\mathbb{Z}}(s)$ can be extended
to a meromorphic function on $\mathbb{C}$ satisfying\[
\zeta_{\mathbb{Z}}(s)=\frac{1}{4^{s}\sqrt{\pi}}\frac{\Gamma(1/2-s)}{\Gamma(1-s)}.\]
It has zeros for $s=n$, $n=1,2,3...,$ and simple poles for $s=1/2+n$,
$n=0,1,2,...$ Moreover, its completion $\xi_{\mathbb{Z}}$, which
is entire, admits the functional equation\[
\xi_{\mathbb{Z}}(s)=\xi_{\mathbb{Z}}(1-s).\]
Finally we have the special values\[
\zeta_{\mathbb{Z}}(-n)=\left(\begin{array}{c}
2n\\
n\end{array}\right)\textrm{ and }\zeta_{\mathbb{Z}}(-n+1/2)=\frac{4^{2n}}{2\pi n\left(\begin{array}{c}
2n\\
n\end{array}\right)},\]
where $n\geq0$ is an integer. 
\end{thm}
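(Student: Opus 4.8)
The statement is essentially a compilation of the facts established earlier in this section, so the plan is to collect them and to verify carefully the one delicate point, namely the exact location of the zeros and poles. To begin with, the closed formula together with the meromorphic continuation is nothing but Proposition \ref{pro:beta}: on the strip $0<Re(s)<1/2$ one has $\zeta_{\mathbb{Z}}(s)=4^{-s}\pi^{-1/2}\Gamma(1/2-s)/\Gamma(1-s)$, and since the right-hand side is meromorphic on all of $\mathbb{C}$ it furnishes the continuation. No further work is needed for this part.

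Second, to read off the zeros and poles I would treat the two gamma factors separately, first observing that the prefactor $4^{-s}\pi^{-1/2}$ is entire and nowhere vanishing. The numerator $\Gamma(1/2-s)$ contributes simple poles exactly at $s=1/2+n$ with $n\geq 0$ and no zeros, while the reciprocal $1/\Gamma(1-s)$ contributes simple zeros exactly where $\Gamma(1-s)$ has its poles, that is at $s=1,2,3,\dots$, and no poles of its own. The point that must be checked is that these features do not cancel: the candidate pole set (half-integers) and the candidate zero set (positive integers) are disjoint, and at each half-integer $s=1/2+n$ the factor $1/\Gamma(1-s)=1/\Gamma(1/2-n)$ is finite and nonzero, while at each positive integer $s=m$ the factor $\Gamma(1/2-m)$ is finite and nonzero. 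Hence the poles and zeros survive precisely as claimed. I expect this no-cancellation bookkeeping to be the only genuine obstacle; everything else is transcription.

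Third, for the functional equation and the entirety of $\xi_{\mathbb{Z}}$ I would reproduce the reflection-formula computation carried out just above the statement. Applying $\Gamma(z)\Gamma(1-z)=\pi/\sin(\pi z)$ to the closed form yields $\zeta_{\mathbb{Z}}(s/2)=2^{1-2s}\tan(\pi s/2)\,\zeta_{\mathbb{Z}}((1-s)/2)$, and substituting into $\xi_{\mathbb{Z}}(s)=2^{s}\cos(\pi s/2)\zeta_{\mathbb{Z}}(s/2)$ rearranges to $\xi_{\mathbb{Z}}(s)=\xi_{\mathbb{Z}}(1-s)$. Entirety follows because, after the substitution $s\mapsto s/2$, the poles of $\zeta_{\mathbb{Z}}(s/2)$ lie at the odd positive integers $s=1+2n$, which are exactly the simple zeros of $\cos(\pi s/2)$; a simple pole against a simple zero gives a holomorphic product, and away from these points both factors are already holomorphic.

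Finally, the special values drop out of the closed formula by inserting $\Gamma(1/2+n)=\frac{(2n)!}{4^{n}n!}\sqrt{\pi}$ and $\Gamma(1+n)=n!$: evaluating at $s=-n$ gives $\binom{2n}{n}$, and at $s=-n+1/2$ gives $\frac{4^{2n}}{2\pi n\binom{2n}{n}}$, which completes the list of assertions.
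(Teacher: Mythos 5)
Your proposal is correct and takes essentially the same route as the paper, which likewise assembles Proposition \ref{pro:beta} for the closed form and continuation, the reflection-formula computation for $\xi_{\mathbb{Z}}(s)=\xi_{\mathbb{Z}}(1-s)$ with the cosine zeros cancelling the poles at odd positive integers, and the values $\Gamma(1/2+n)=\frac{(2n)!}{4^{n}n!}\sqrt{\pi}$, $\Gamma(1+n)=n!$ for the special values. One pedantic remark: $\cos(\pi s/2)$ vanishes at \emph{all} odd integers, not only at the positive ones where $\zeta_{\mathbb{Z}}(s/2)$ has its poles, so ``exactly'' should read ``among''; the extra zeros at negative odd integers merely produce zeros of $\xi_{\mathbb{Z}}$ and do not affect entirety.
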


\section{The spectral zeta function of the lattice graphs $\mathbb{Z}^{d}$
\label{sec:zeta of Zd}}

The heat kernel on $\mathbb{Z}^{d}$ is the product of heat kernels
on $\mathbb{Z}$ and this gives that

\[
\zeta_{\mathbb{Z}^{d}}(s)=\frac{1}{\Gamma(s)}\mbox{\ensuremath{\int_{0}^{\infty}e^{-2dt}I_{0}}(2t\ensuremath{)^{d}t^{s}\frac{dt}{t}},}\]
which converges for $0<Re(s)<d/2$. For $d=2$ taking instead the
equivalent definition with the spectral measure, the spectral zeta
function is a variant of the Selberg integral with two variables. 

The integrals like \[
\int_{0}^{\infty}e^{-zt}I_{0}(2t)^{d}t^{s}\frac{dt}{t},\]
and more general ones, have been studied by Saxena in \cite{Sa66},
see also the discussion in \cite[sect. 9.4]{SK85}. For $Re(z)>2d$
and $Re(s)>0$ one has\[
\int_{0}^{\infty}e^{-zt}I_{0}(2t)^{d}t^{s}\frac{dt}{t}=\frac{2^{s-1}}{\sqrt{\pi}}z^{-s+1/2}\Gamma(\frac{s+1}{2})F_{C}^{(d)}(s/2,(s+1)/2;1,1,...,1;4/z^{2},4/z^{2},...,4/z^{2}),\]
where $F_{C}^{(d)}$ is one of the Lauricella hypergeometric functions
in $d$ variables \cite{Ex76}. The condition $Re(z)>2d$ can presumably
be relaxed by the principle of analytic continuation giving up the
multiple series definition of $F_{C}^{(d)}$. This point is discussed
in \cite{SE79}. Formally we would then have that \[
\zeta_{\mathbb{Z}^{d}}(s)=\frac{d^{-s+1/2}}{\sqrt{2\pi}}\frac{\Gamma((s+1)/2)}{\Gamma(s)}F_{C}^{(d)}(s/2,(s+1)/2;1,1,...,1;1/d^{2},1/d^{2},...,1/d^{2}),\]

which is rather suggestive as far as functional relations go. It is
however not clear at present time that for $d>1$ there is a relation
as nice as the functional equation in the case $d=1.$ Related to
this, it is remarked in \cite[p. 49]{Ex76} that no integral representation
of Euler type has been found for $F_{C}$. We note that if one instead
of the heat kernel start with the spectral measure in defining $\zeta_{\mathbb{Z}^{d}}(s)$,
we do get such an integral representation, at least for special parameters.
This aspect is left for future investigation.

We will now provide an independent and direct meromorphic continuation
of these functions. To do this, we take advantage of the heat kernel
definition of the zeta function. Fix a dimension $d\geq1$. Recall
that on the one hand there are explicit positive non-zero coefficients
$a_{n}$ such that\[
e^{-2dt}I_{0}(2t)^{d}=\sum_{n\geq0}a_{n}t^{n}\]
which converges for every positive $t$, and on the other hand we
similarily have an expansion at infinity, \[
e^{-2dt}I_{0}(2t)^{d}=\sum_{n=0}^{N-1}b_{n}t^{-n-d/2}+O(t^{-N-d/2})\]
as $t\rightarrow\infty$ for any integer $N>0$. 

Therefore we write\[
\int_{0}^{\infty}e^{-2dt}I_{0}(2t)^{d}t^{s-1}dt=\int_{0}^{1}\sum_{n=0}^{N-1}a_{n}t^{n}t^{s-1}dt+\int_{0}^{1}\sum_{n\geq N}a_{n}t^{n}t^{s-1}dt+\]

\[
+\int_{1}^{\infty}\left(e^{-2dt}I_{0}(2t)^{d}-\sum_{n=0}^{N-1}b_{n}t^{-n-d/2}\right)t^{s-1}dt+\int_{1}^{\infty}\sum_{n=0}^{N-1}b_{n}t^{-n-d/2}t^{s-1}dt=\]

\[
=\sum_{n=0}^{N-1}\frac{a_{n}}{s+n}+\sum_{n=0}^{N-1}\frac{b_{n}}{s-(n+d/2)}+\int_{0}^{1}O(t^{N})t^{s-1}dt+\int_{1}^{\infty}O(t^{-N-d/2})t^{s-1}dt.\]
This last expression defines a meromorphic function in the region
$-N<Re(s)<N+d/2$, with simple poles at $s=-n$ and $s=n+d/2$. 

The spectral zeta function $\zeta_{\mathbb{Z}^{d}}(s)$ is the above
integral divided by $\Gamma(s)$. In view of that the entire function
$1/\Gamma(s)$ has zeros at the non-positive integers, this will cancel
the simple poles at $s=-n$. Since we can take $N$ as large as we
want we obtain in this way the meromorphic continuation of $\zeta_{\mathbb{Z}^{d}}(s)$.
Moreover, thanks to that the coefficients $b_{n}$ are non-zero we
have established: 
\begin{prop}
The function $\zeta_{\mathbb{Z}^{d}}(s)$ admits a meromorphic continuation
to the whole complex plane with simple poles at the points $s=n+d/2$
with $n\geq0$.
\end{prop}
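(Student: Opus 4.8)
The plan is to run the standard heat-kernel Mellin-transform continuation, splitting the defining integral at $t=1$ and feeding in the two expansions of the on-diagonal heat kernel $\Phi(t):=e^{-2dt}I_0(2t)^d=\bigl(e^{-2t}I_0(2t)\bigr)^d$: its convergent Taylor expansion at $t=0$ and its asymptotic expansion as $t\to\infty$.

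First I would record the two expansions. Since $I_0$ is entire with $I_0(0)=1$, the function $\Phi$ is entire, so near $t=0$ one has $\Phi(t)=\sum_{n\ge0}a_n t^n$ (convergent for all $t$) with $a_0=1$. For the behaviour at infinity I would invoke the classical asymptotic series $I_0(z)\sim \frac{e^z}{\sqrt{2\pi z}}\sum_{k\ge0}c_k z^{-k}$, whence $e^{-2t}I_0(2t)\sim (4\pi t)^{-1/2}\sum_{k\ge0}(c_k 2^{-k})t^{-k}$, and, taking the $d$-th power,
\[
\Phi(t)=\sum_{n=0}^{N-1}b_n t^{-n-d/2}+O(t^{-N-d/2})\qquad (t\to\infty)
\]
for every integer $N>0$, with $b_0=(4\pi)^{-d/2}$.

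Next I would split $\int_0^\infty\Phi(t)t^{s-1}\,dt=\int_0^1+\int_1^\infty$. On $[0,1]$ I subtract the partial Taylor sum $\sum_{n<N}a_n t^n$, integrate it termwise to get $\sum_{n<N}\frac{a_n}{s+n}$, and leave a remainder $\int_0^1 O(t^{N})t^{s-1}\,dt$ that is holomorphic for $\mathrm{Re}(s)>-N$. On $[1,\infty)$ I subtract the partial asymptotic sum $\sum_{n<N}b_n t^{-n-d/2}$, integrate it to get $\sum_{n<N}\frac{b_n}{s-(n+d/2)}$, with remainder $\int_1^\infty O(t^{-N-d/2})t^{s-1}\,dt$ holomorphic for $\mathrm{Re}(s)<N+d/2$. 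This exhibits the Mellin integral as meromorphic on $-N<\mathrm{Re}(s)<N+d/2$ with only simple poles, located at $s=-n$ and at $s=n+d/2$ for $0\le n\le N-1$. Dividing by $\Gamma(s)$ and using that $1/\Gamma$ is entire with simple zeros exactly at $0,-1,-2,\dots$ removes the left-hand poles, whereas $\Gamma(n+d/2)$ is finite and nonzero (because $n+d/2>0$), so the poles at $s=n+d/2$ are untouched. Since $N$ is arbitrary, letting $N\to\infty$ gives the continuation to all of $\mathbb{C}$.

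The only step carrying genuine content --- and the main obstacle --- is to guarantee that the poles at $s=n+d/2$ really persist, i.e.\ that $b_n\ne0$ for every $n$ rather than cancelling by accident. I would settle this by positivity: for $\nu=0$ the factor $(-1)^k$ in the standard Bessel asymptotics cancels the sign of the product $(0-1)(0-9)\cdots(0-(2k-1)^2)=(-1)^k\prod_{j}(2j-1)^2$, so all coefficients $c_k$ are strictly positive; hence $e^{-2t}I_0(2t)$ has a large-$t$ expansion with positive coefficients, and its $d$-fold Cauchy product inherits positivity. Thus $b_n>0$ for all $n\ge0$, which forces each $s=n+d/2$ to be an actual simple pole and completes the argument.
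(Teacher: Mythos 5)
Your proposal is correct and follows essentially the same route as the paper: split the Mellin integral of $e^{-2dt}I_0(2t)^d$ at $t=1$, integrate the partial Taylor and asymptotic expansions termwise to produce the simple poles at $s=-n$ and $s=n+d/2$, and use the zeros of $1/\Gamma(s)$ at the non-positive integers to cancel the former while letting $N\to\infty$. Your positivity argument for the $b_n$ (the sign $(-1)^k$ in the $I_0$ asymptotics cancels that of $\prod_{j}(0-(2j-1)^2)$, so the expansion of $e^{-2t}I_0(2t)$ has strictly positive coefficients, inherited by the $d$-fold Cauchy product) is a worthwhile addition, since the paper merely asserts that the $b_n$ are non-zero so that the poles at $s=n+d/2$ actually occur.
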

It is natural to wonder whether this function also for $d>1$ can
be completed like in the case $d=1$ giving an entire function with
functional relation $\xi_{\mathbb{Z}^{d}}(1-s)=\xi_{\mathbb{Z}^{d}}(s)$.
Indeed, more generally we find the question interesting for which
graph, finite or infinite, the zeta functions have a functional relation
in some way analogous to the classical type of functional equations.

Finally we point out a non-trivial special value that we derive in
a later section: \[
\zeta_{\mathbb{Z}^{d}}(0)=1.\]

\section{Asymptotics of the zeta functions of torus graphs\label{sec:Asymptotics}}

We consider a sequence of torus graphs $\mathbb{Z}^{d}\mathrm{/}A_{n}\mathbb{Z}^{d}$
indexed by $n$ and where the matrices $A_{n}$ are diagonal with
entries $a_{i}n$, with integers $a_{i}>0.$ (A more general setting
could be considered (\emph{cf. }\cite{CJK12}) but it will not be
important to us in the present context.) We denote by $\zeta_{n}$
the corresponding zeta function defined as in the previous section.
We let the matrix $A$ be the diagonal matrix with entries $a_{i}$.
In this section we take advantage of the theory developed in \cite{CJK10}
without recalling the proofs which would take numerous pages. 

Following \cite{CJK10} we have\[
\theta_{n}(t):=\sum_{m}e^{-\lambda_{m}t}=\det(A_{n})\sum_{k\in\mathbb{Z}\mathrm{^{d}}}\prod_{1\leq j\leq d}e^{-2t}I_{a_{j}nk_{j}}(2t),\]
where $\lambda_{m}$ denotes the Laplace eigenvalues. From the left
hand side it is clear that this function is entire. Let\[
\theta_{A}(t)=\sum_{\lambda}e^{-\lambda t},\]
where the sum is over the eigenvalues of the torus $\mathbb{R}^{d}/A\mathbb{Z}^{d}$.
The meromorphic continuation of the corresponding spectral zeta function
is, as is well-known (see \emph{e.g.} \cite{CJK10}),\[
\zeta_{\mathbb{R}^{d}/A\mathbb{Z}^{d}}(s)=\frac{1}{\Gamma(s)}\int_{1}^{\infty}\left(\theta_{A}(t)-1\right)t^{s}\frac{dt}{t}+\frac{1}{\Gamma(s)}\int_{0}^{1}\left(\theta_{A}(t)-\det A(4\pi t)^{-d/2}\right)t^{s}\frac{dt}{t}+\]

\[
+\frac{(4\pi)^{-d/2}\det A}{\Gamma(s)(s-d/2)}-\frac{1}{s\Gamma(s)}.\]

Recall the asymptotics for the I-Bessel functions: \[
I_{n}(x)=\frac{e^{x}}{\sqrt{2\pi x}}\left(1-\frac{4n^{2}-1}{8x}+O(x^{-2})\right)\]
as $x\rightarrow\infty.$

For $0<Re(s)<d/2$ we may write\[
\Gamma(s)\zeta_{n}(s)=\int_{0}^{\infty}\left(\theta_{n}(t)-1\right)t^{s}\frac{dt}{t}=n^{2s}\int_{0}^{\infty}\left(\theta_{n}(n^{2}t)-1\right)t^{s}\frac{dt}{t}.\]
We decompose the integral on the right and let $n\rightarrow\infty$,
the first piece being\[
S_{1}(n):=\int_{1}^{\infty}\left(\theta_{n}(n^{2}t)-1\right)t^{s}\frac{dt}{t}\rightarrow\int_{1}^{\infty}\left(\theta_{A}(t)-1\right)t^{s}\frac{dt}{t}\]
for every $s\in\mathbb{C}$ as $n\rightarrow\infty$. The convergence
is proved in \cite{CJK10}. The second piece is for $Re(s)>-n$, \[
S_{2}(n):=\int_{0}^{1}\left(\theta_{n}(n^{2}t)-\det A_{n}e^{-2dn^{2}t}I_{0}(2n^{2}t)^{d}\right)t^{s}\frac{dt}{t}\rightarrow\int_{0}^{1}\left(\theta_{A}(t)-\det A(4\pi t)^{-d/2}\right)t^{s}\frac{dt}{t},\]
as $n\rightarrow\infty$ which is proved in \cite{CJK10}. 

What remains is now the third piece\[
S_{3}(n):=\int_{0}^{1}\left(\det A_{n}e^{-2dn^{2}t}I_{0}(2n^{2}t)^{d}-1\right)t^{s}\frac{dt}{t}=n^{-2s}\int_{0}^{n^{2}}\left(\det A_{n}e^{-2dt}I_{0}(2t)^{d}-1\right)t^{s}\frac{dt}{t}.\]
This we write as follows\[
S_{3}(n)=\left(\det A_{n}\int_{0}^{\infty}e^{-2dt}I_{0}(2t)^{d}t^{s}\frac{dt}{t}-\det A_{n}\int_{n^{2}}^{\infty}e^{-2dt}I_{0}(2t)^{d}t^{s}\frac{dt}{t}-\int_{0}^{n^{2}}t^{s}\frac{dt}{t}\right)n^{-2s}.\]
The first integral is the spectral zeta of $\mathbb{Z}^{d}$ times
$\Gamma(s)$ and the last integral is \[
\int_{0}^{n^{2}}t^{s}\frac{dt}{t}=\frac{n^{2s}}{s}.\]
We continue with the middle integral here: \[
\int_{n^{2}}^{\infty}e^{-2dt}I_{0}(2t)^{d}t^{s}\frac{dt}{t}=\int_{n^{2}}^{\infty}\left(e^{-2dt}I_{0}(2t)^{d}-(4\pi t)^{-d/2}\right)t^{s}\frac{dt}{t}+\int_{n^{2}}^{\infty}(4\pi t)^{-d/2}t^{s}\frac{dt}{t},\]
hence\[
\int_{n^{2}}^{\infty}e^{-2dt}I_{0}(2t)^{d}t^{s}\frac{dt}{t}=\int_{n^{2}}^{\infty}\left(e^{-2dt}I_{0}(2t)^{d}-(4\pi t)^{-d/2}\right)t^{s}\frac{dt}{t}-(4\pi)^{-d/2}\frac{n^{2s-d}}{s-d/2}.\]
We denote\[
S_{rest}(n)=\int_{n^{2}}^{\infty}\left(e^{-2dt}I_{0}(2t)^{d}-(4\pi t)^{-d/2}\right)t^{s}\frac{dt}{t},\]
which is a convergent integral for $Re(s)<d/2+1$ in view of the asymptotics
for $I_{0}(t)$. Notice also that for fixed $s$ with $Re(s)<d/2+1$
the integral is of order $n^{2s-2-d}$ as $n\rightarrow\infty.$

Taken all together we have\[
n^{-2s}\zeta_{n}(s)=\frac{1}{\Gamma(s)}S_{1}(n)+\frac{1}{\Gamma(s)}S_{2}(n)-\frac{1}{s\Gamma(s)}+(4\pi)^{-d/2}\frac{\det A}{\Gamma(s)(s-d/2)}+\]

\[
+n^{d-2s}\det A\,\zeta_{\mathbb{Z}^{d}}(s)-n^{d-2s}\frac{\det A}{\Gamma(s)}S_{rest}(n).\]
This is valid for all $s$ in the intersection of where $\zeta_{\mathbb{Z}^{d}}(s)$
is defined, $-n<Re(s)<d/2+1$, and $s\neq d/2.$ As remarked above
coming from \cite{CJK10} as $n\rightarrow\infty$ the first four
terms combines to give $\zeta_{\mathbb{R}^{d}/A\mathbb{Z}^{d}}(s)$.
This means that we have in particular proved Theorem \ref{thm:Zd}.

\section{The one dimensional case\label{sec:dimension one}}

We now specialize to $d=1$ and $A_{n}=n.$ In this case recall that\[
\zeta_{n}(s)=\zeta_{\mathbb{Z\mathrm{/}}n\mathbb{Z}}(s)=\frac{1}{4^{s}}\sum_{k=1}^{n-1}\frac{1}{\sin(\pi k/n)^{2s}}\]
and\[
\zeta_{\mathbb{R\mathrm{/\mathbb{Z}}}}(s)=2(2\pi)^{-2s}\zeta(2s),\]

where $\zeta$ is the Riemann zeta function. Moreover, \[
\zeta_{\mathbb{Z}}(s)=\frac{1}{4^{s}\sqrt{\pi}}\frac{\Gamma(1/2-s)}{\Gamma(1-s)}.\]
In view of the previous section the first part of Theorem \ref{thm:d=00003D1}
is established. Let us remark that this can also be viewed as a special
case of Gauss-Chebyshev quadrature but with a more precise error term.

With more work one can also find the next term in the asymptotic expansion
in the critical strip. This can be achieved with some more detailed
analysis, in particular of Proposition 4.7 in \cite{CJK10} and an
application of Poisson summation. For the purpose of the present discussion
we only need to look at the more precise asymptotics in the critical
strip and here for $d=1$ there is an alternative approach available
by using a non-standard version of the Euler-Maclaurin formula established
in \cite{Si04}. The asymptotics is:\[
\sum_{k=1}^{n-1}\frac{1}{\sin(\pi k/n)^{s}}=\frac{1}{\sqrt{\pi}}\frac{\Gamma(1/2-s/2)}{\Gamma(1-s/2)}n+2\pi^{-s}\zeta(s)n^{s}+\frac{s}{3}\pi^{2-s}\zeta(s-2)n^{s-2}+o(n^{s-2})\]
where $0<Re(s)<1$ as $n\rightarrow\infty.$ This is the second statement
in Theorem \ref{thm:d=00003D1}.

\noun{Example: }Although we did not verify this asymptotics outside
of the critical strip, it may nevertheless be convincing to specialize
to $s=2$ , we then would have\[
\frac{1}{3}n^{2}-\frac{1}{3}=\frac{1}{\sqrt{\pi}}0\cdot n+2\pi^{-2}\zeta(2)n^{2}+\frac{2}{3}\zeta(0)+o(1),\]
which confirms the values $\zeta(0)=-1/2$ and $\zeta(2)=\pi^{2}/6$.
As remarked in the introduction, from \cite{CJK10}, the value of
$\zeta(2)$ can also be derived via \[
\frac{2}{\pi^{2}}\zeta(2)=\lim_{n\rightarrow\infty}\frac{1}{n^{2}}\left(\frac{1}{3}n^{2}-\frac{1}{3}\right).\]

\section{Special values\label{sec:Special-values}}

\subsection{The case of $s=0$}

Setting $s=0$ in Theorem \ref{thm:Zd} we clearly have\[
\det A\: n^{d}-1=\zeta_{\mathbb{Z}^{d}}(0)\det A\: n^{d}+\zeta_{\mathbb{R}^{d}/A\mathbb{Z}^{d}}(0)+o(1),\]
which implies that $\zeta_{\mathbb{Z}^{d}}(0)=1$, and that $\zeta_{\mathbb{R}^{d}/A\mathbb{Z}^{d}}(0)=-1$,
which is a known special value of Epstein zeta functions.

\subsection{The case of $s$ being negative integers\label{sub:special-combinatorics}}

Let us now recall some known results about the sums: \[
\sum_{k=1}^{n-1}\frac{1}{\sin(\pi k/n)^{s}}\]
 for special $s$. We begin with a simple calculation (see for example
\cite[Lemma 3.5]{BM10}) namely that for integers $0<m<n$ \[
\sum_{k=1}^{n-1}\sin^{2m}(\pi k/n)=\frac{n}{4^{m}}\left(\begin{array}{c}
2m\\
m\end{array}\right).\]
In view of the asymptotics in Theorem \ref{thm:d=00003D1} this immediately
imply that $\zeta(-2m)=0$, the so-called trivial zeros of Riemann's
zeta function. It also verifies with the special values of $\zeta_{\mathbb{Z}}$
stated in section \ref{sec:zeta of Z}. There is a probabilistic interpretation
for this: when the number of steps $m$ is smaller than $n$, the
random walker cannot tell the difference between the graphs $\mathbb{Z}$
and $\mathbb{Z}/n\mathbb{Z}$.

Conversely, for $s$ being an odd negative integer our asymptotic
formula gives information about the sine sum which is somewhat more
complicated in this case, as the fact that $\zeta$ does not vanish
implies. For low exponent $m$ one can find formulas in \cite{GR07},
the simplest one being\[
\sum_{k=1}^{n-1}\sin(k\pi/n)=\cot(\pi/2n).\]

\subsection{The case of $s$ being even positive integers}

In view of the elementary equality \[
\frac{1}{\sin^{2}x}=1+\cot^{2}x,\]
one sees that for positive integers $a$,\[
\sum_{k=1}^{n-1}\frac{1}{\sin^{2a}(\pi k/n)}\]
can be expresed in terms of higher Dedekind sums considered by Zagier
\cite{Z73}. There is also a literature more specialized on this type
of finite sums which can be evaluated with a closed form expression
already mentioned in the introduction (see \cite{CM99,BY02}):\[
\sum_{k=1}^{n-1}\frac{1}{\sin^{2a}(\pi k/n)}=-\frac{1}{2}\sum_{m=0}^{2a}\frac{(-4)^{a}}{n^{m}}\left(\begin{array}{c}
2a+1\\
m+1\end{array}\right)\times\]
\[
\times\sum_{k=0}^{m+1}(-1)^{k}\left(\begin{array}{c}
m+1\\
k\end{array}\right)\frac{m+1-2k}{m+1}\left(\begin{array}{c}
a+kn+(m-1)/2\\
2a+m\end{array}\right).\]
These sums apparently arose in physics in Dowker's work and in mathematical
work of Verlinde (see \cite{CS12}). The first order asymptotics is
known to be \[
\sum_{k=1}^{n-1}\sin^{-2m}(\pi k/n)\sim(-1)^{m+1}(2n)^{2m}\frac{B_{2m}}{(2m)!},\]
where $m$ is a positive integer, see for example \cite{BY02,CS12}
and their references. As explained in the introduction these evaluations
together with the asymptotics formulated in the introduction re-proves
Euler's celebrated calculations of $\zeta(2m)$.

At $s=1$, the point where our asymptotic expansion does not apply
because of the pole of $\zeta$, one has (see \cite[p. 460]{He77}
attributed to J. Waldvogel)\[
\zeta_{\mathbb{Z\mathrm{/}}n\mathbb{Z}}(1)=\frac{2n}{\pi}\left(\log(2n/\pi)-\gamma\right)+O(1),\]
where as usual $\gamma$ is Euler's constant.

\subsection{Further special values}

Recall the values $\Gamma(1/2)=\sqrt{\pi}$, $\Gamma'(1)=-\gamma$
and $\Gamma'(1/2)=-\gamma\sqrt{\pi}-\log4$, or in the logarithmic
derivative, the psi-function, $\psi(1)=-\gamma$ and $\psi(1/2)=-\gamma-2\log2$.
We differentiate $\zeta_{\mathbb{Z}}(s)$ which gives \[
\zeta_{\mathbb{Z}}'(s)=\zeta_{\mathbb{Z}}(s)\left(-2\log2-\psi(1/2-s)+\psi(1-s)\right).\]
Setting $s=0$ and inserting the special values mentioned we see that\[
\zeta_{\mathbb{Z}}'(0)=0.\]
This value has the interpretation of being the tree entropy of $\mathbb{Z}$,
which is the exponential growth rate of spanning trees of subgraphs
converging to $\mathbb{Z}$, see e.g. \cite{DD88,Ly10,CJK10}, studied
via the Fuglede-Kadison determinant of the Laplacian. This has also
a role in the theory of operator algebras, but in any case it is not
evaluated in this way in the literature. Of course one could in our
way compute other special values of $\zeta_{\mathbb{Z}}'$. For example,
at positive integers and half-integers this function has zeros and
poles, respectively, and at negative integers we have for integers
$n>0$ the following:
\begin{prop}
It holds that\[
\zeta_{\mathbb{Z}}'(-n)=\left(\begin{array}{c}
2n\\
n\end{array}\right)\left(1+\frac{1}{2}+...+\frac{1}{n}-2\left(1+\frac{1}{3}+...+\frac{1}{2n-1}\right)\right)\]
and\[
\zeta_{\mathbb{Z}}'(-n+1/2)=\frac{4^{2n}}{2\pi n\left(\begin{array}{c}
2n\\
n\end{array}\right)}\left(-4\log4-1-\frac{1}{2}-\frac{1}{3}...-\frac{1}{n-1}+2\left(1+\frac{1}{3}+...+\frac{1}{2n-1}\right)\right).\]

\end{prop}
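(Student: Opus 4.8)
The plan is to read off both values directly from the logarithmic-derivative formula
\[
\zeta_{\mathbb{Z}}'(s)=\zeta_{\mathbb{Z}}(s)\left(-2\log2-\psi(1/2-s)+\psi(1-s)\right)
\]
derived just above, by evaluating it at $s=-n$ and at $s=-n+1/2$ and substituting the special values of $\zeta_{\mathbb{Z}}$ already established together with standard digamma values. The only inputs needed beyond the displayed formula are $\psi(1)=-\gamma$, $\psi(1/2)=-\gamma-2\log2$ and the recurrence $\psi(x+1)=\psi(x)+1/x$, which yield
\[
\psi(n+1)=-\gamma+\sum_{k=1}^{n}\frac{1}{k},\qquad \psi(n)=-\gamma+\sum_{k=1}^{n-1}\frac{1}{k},\qquad \psi(n+\tfrac12)=-\gamma-2\log2+2\sum_{k=1}^{n}\frac{1}{2k-1}.
\]

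For the first identity I would put $s=-n$, so that $1/2-s=n+1/2$ and $1-s=n+1$, and use the prefactor $\zeta_{\mathbb{Z}}(-n)=\binom{2n}{n}$ from the preceding theorem. Substituting the digamma values, the two copies of $-\gamma$ cancel and the explicit $-2\log2$ is cancelled by the $+2\log2$ arising from the term $-\psi(n+1/2)$; what remains is precisely $\sum_{k=1}^{n}\frac{1}{k}-2\sum_{k=1}^{n}\frac{1}{2k-1}$, the harmonic combination asserted in the statement.

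For the second identity I would put $s=-n+1/2$, so that now $1/2-s=n$ and $1-s=n+1/2$, and use $\zeta_{\mathbb{Z}}(-n+1/2)=4^{2n}/(2\pi n\binom{2n}{n})$. Here the integer-argument contribution is $\psi(n)$, which carries no logarithm; consequently the explicit $-2\log2$ is no longer cancelled but instead reinforces the $-2\log2$ coming from the term $+\psi(n+1/2)$, which is exactly the source of the logarithmic term in the stated formula, while the $\gamma$'s again cancel and the odd-harmonic sum survives against the harmonic sum of $\psi(n)$.

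Both evaluations are routine substitutions once the digamma values are recorded, so the only delicate point, and the step I would verify most carefully, is the bookkeeping of the $\log 2$ contributions. They behave oppositely in the two cases because in the first the half-integer argument is $1/2-s$, entering with a minus sign, whereas in the second the half-integer argument is $1-s$, entering with a plus sign; this sign flip of the half-integer digamma is precisely what makes the logarithm cancel at the integer points $s=-n$ and persist at the half-integer points $s=-n+1/2$, so I would pin down the exact coefficient of $\log 2$ (and hence the displayed logarithmic term) by re-deriving it against the small case $n=1$ as a check.
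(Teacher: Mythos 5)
Your proposal is correct and is exactly the paper's own (implicit) argument: the paper derives $\zeta_{\mathbb{Z}}'(s)=\zeta_{\mathbb{Z}}(s)\left(-2\log2-\psi(1/2-s)+\psi(1-s)\right)$ from the closed form of $\zeta_{\mathbb{Z}}$ and obtains the proposition by substituting $s=-n$ and $s=-n+1/2$ together with the special values $\zeta_{\mathbb{Z}}(-n)=\binom{2n}{n}$, $\zeta_{\mathbb{Z}}(-n+1/2)=4^{2n}/\bigl(2\pi n\binom{2n}{n}\bigr)$ and the digamma recurrences, just as you describe.

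One remark on precisely the point you flagged for verification: your $\log 2$ bookkeeping is right, and it exposes a typo in the printed statement rather than an error on your side. At $s=-n+1/2$ one has $1/2-s=n$ and $1-s=n+1/2$, so the bracket is
\[
-2\log2-\psi(n)+\psi(n+\tfrac12)
=-4\log2-\Bigl(1+\tfrac12+\dots+\tfrac{1}{n-1}\Bigr)+2\Bigl(1+\tfrac13+\dots+\tfrac{1}{2n-1}\Bigr),
\]
so the constant is $-4\log2=-2\log4$, not the $-4\log4$ appearing in the proposition. Your own suggested $n=1$ check confirms this: the formula gives $\zeta_{\mathbb{Z}}'(-1/2)=\frac{4}{\pi}\left(2-4\log2\right)\approx-0.984$, in agreement with direct numerical differentiation of $\zeta_{\mathbb{Z}}(s)=4^{-s}\pi^{-1/2}\Gamma(1/2-s)/\Gamma(1-s)$, whereas the printed constant would give $\approx-4.51$. (The same subsection of the paper contains a related slip: $\Gamma'(1/2)$ equals $-\gamma\sqrt{\pi}-\sqrt{\pi}\log4$, not $-\gamma\sqrt{\pi}-\log4$ as stated.) With the constant corrected to $-2\log 4$, your derivation, carried out as you outline it, is a complete and correct proof of both identities.
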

We remark that this section concerned mostly $d=1$, we have not investigated
the case of higher dimensions.

\subsection{No real zero in the crticial strip}

The non-vanishing of number theoretic zeta functions on the real line
in the critical strip is of importance, see e.g. \cite{SC67}. We
outline one possible strategy for this problem in general from our
asymptotics. We treat here only Rieman's zeta function for illustration,
in this case there are however other more elementary arguments available.

Already the beginning of this section indicates that certain Epstein
zeta functions have a tendency to be negative on the real line in
the critical strip. It is as if the number of terms in the finite
graph zeta is not enough to account for the limit graph zeta function,
leaving the relevant Epstein zeta function negative. 

The function $\sin(\pi x)^{-s}$ for $0<s<1$ is positive, convex
and symmetric around $x=1/2.$ The graph zeta funciton in question,
$\zeta_{\mathbb{Z}}(s)$ is via a change of variables \[
\int_{0}^{1}\sin^{-s}(\pi x)dx.\]
If we compare this with the sum, using the symmetry, we have for odd
$n$\[
2\left(\frac{1}{n}\sum_{k=1}^{(n-1)/2}\sin^{-s}(\pi k/n)-\int_{0}^{1/2}\sin^{-s}(\pi x)dx\right)=\frac{2\zeta(s)}{\pi^{s}}n^{s-1}+o(n^{s-1}).\]
If we interpret the sum as the Riemann sum of the integral (with not
enough terms) the integral can be thought of as always lying above
the rectangles. Ignoring all but one rectangle then gives\[
\frac{1}{n}\sum_{k=1}^{(n-1)/2}\sin^{-s}(\pi k/n)-\int_{0}^{1/2}\sin^{-s}(\pi x)dx<\]
\[
<\frac{1}{n}\sin^{-s}(\pi/n)-\int_{0}^{1/n}\sin^{-s}(\pi x)dx=\frac{1}{n}\frac{n^{s}}{\pi^{2}}-\frac{1}{\pi^{s}}\int_{0}^{1/n}x^{-s}dx+o(n^{s-1})=\]
\[
=\frac{n^{s-1}}{\pi^{s}}\left(1-\frac{1}{1-s}\right)+o(n^{s-1}).\]
This shows by letting $n$ go to infinity that \[
\zeta(s)\leq-\frac{s}{1-s}<0,\]
which is consistent with numerics, for example, $\zeta(1/2)=-1.460...<-1$. 

As with several other aspects of this paper, we leave higher dimensions
to future study.

\section{Approximative functional equations\label{sec:Approximative-functional-equations}}

It is natural to wonder about to what extent $\zeta_{\mathbb{Z}/n\mathbb{Z}}$
has a functional equation. In view of our asymptotics and the, in
this context crucial, relation $\xi_{\mathbb{Z}}(s)=\xi_{\mathbb{Z}}(1-s)$,
one could expect at least an asymptotic version. Indeed, we start
by completing the finite torus zeta functions as $\xi_{\mathbb{Z}/n\mathbb{Z}}(s):=2^{s}\cos(\pi s/2)\zeta_{\mathbb{Z}/n\mathbb{Z}}(s/2)$,
and multiply the asymptotics at $s$ in the critical strip with the
corresponding fudge factors, and do the similar thing for the corresponding
formula at $1-s$. After that, we subtract the two expressions, the
one at $s$ with the one at $1-s$, and obtain after further calculations,
notably using $\xi_{\mathbb{Z}}(s)=\xi_{\mathbb{Z}}(1-s)$:\[
\xi_{\mathbb{Z}/n\mathbb{Z}}(s)-\xi_{\mathbb{Z}/n\mathbb{Z}}(1-s)=X(s)n^{s}-X(1-s)n^{1-s}+\]
\[
-\frac{s}{6}X(s-2)n^{s-2}+\frac{1-s}{6}X((1-s)-2)n^{(1-s)-2}+o(n^{a}),\]
where $a=\max\left\{ Re(s)-2,-1-Re(s)\right\} $ and $X(s)=2\pi^{-s}\cos(\pi s/2)\zeta(s)$.
Thus:
\begin{cor}
The Riemann zeta function has a zero at $s$ in the critical strip
iff \[
\lim_{n\rightarrow\infty}\left(\xi_{\mathbb{Z}/n\mathbb{Z}}(1-s)-\xi_{\mathbb{Z}/n\mathbb{Z}}(s)\right)=0\]
as $n\rightarrow\infty$, unless $s=1/2$. In any case, for all $s$
in the critical strip\[
\lim_{n\rightarrow\infty}\frac{1}{n}\left(\xi_{\mathbb{Z}/n\mathbb{Z}}(1-s)-\xi_{\mathbb{Z}/n\mathbb{Z}}(s)\right)=0\]

\end{cor}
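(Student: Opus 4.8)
The plan is to read everything off the asymptotic expansion displayed just above the corollary. Flipping its sign, for $s=\sigma+i\tau$ with $\sigma=\mathrm{Re}(s)$ and $0<\sigma<1$ we have
\[
\xi_{\mathbb{Z}/n\mathbb{Z}}(1-s)-\xi_{\mathbb{Z}/n\mathbb{Z}}(s)=-X(s)n^{s}+X(1-s)n^{1-s}+\frac{s}{6}X(s-2)n^{s-2}-\frac{1-s}{6}X(-1-s)n^{-1-s}+o(n^{a}),
\]
with $a=\max\{\sigma-2,-1-\sigma\}<0$. First I would record the sizes of the terms: the two trailing explicit terms have moduli $n^{\sigma-2}$ and $n^{-1-\sigma}$, both tending to $0$, and $o(n^{a})\to0$ as well. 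Hence only the first two terms can contribute, and the whole expression equals $-X(s)n^{s}+X(1-s)n^{1-s}+o(1)$.

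Next I would translate vanishing of $X$ into vanishing of $\zeta$. Since $X(s)=2\pi^{-s}\cos(\pi s/2)\zeta(s)$ and, in the open strip, neither $\pi^{-s}$ nor $\cos(\pi s/2)$ vanishes, we have $X(s)=0\iff\zeta(s)=0$, and likewise $X(1-s)=0\iff\zeta(1-s)=0$. Crucially, the Riemann functional equation (equivalently $\xi(s)=\xi(1-s)$ for the completed Riemann zeta) gives that in the strip $\zeta(s)=0\iff\zeta(1-s)=0$; this symmetry of the nontrivial zeros is what makes the two leading terms vanish or survive together.

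Then comes the case analysis. If $\zeta(s)=0$, then $X(s)=X(1-s)=0$, the two leading terms disappear, and the limit is $0$. If $\zeta(s)\neq0$, then both $X(s)$ and $X(1-s)$ are nonzero; for $\sigma\neq1/2$ the term carrying the larger of the exponents $\sigma,1-\sigma$ dominates, the modulus of the whole expression tends to $\infty$, and the limit is not $0$. The one genuinely delicate subcase---and the main obstacle---is $\sigma=1/2$ with $\tau\neq0$, where both leading terms have modulus $n^{1/2}$. Here I would use that on the critical line $1-s=\bar s$, so $X(1-s)=\overline{X(s)}$, and write the bracketed factor as $-2i\,\mathrm{Im}\bigl(X(s)e^{i\tau\log n}\bigr)=-2i|X(s)|\sin\bigl(\arg X(s)+\tau\log n\bigr)$. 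I would then invoke density of $\{\tau\log n \bmod \pi\}$ (which holds because $\tau\log n\to\infty$ with increments $\tau\log(1+1/n)\to0$) to extract a subsequence on which $|\sin|\geq1/2$; along it $n^{1/2}$ times the bracket diverges, so again the limit is not $0$. This proves the biconditional for every $s\neq1/2$. At $s=1/2$ one checks directly that $s=1-s$ forces exact cancellation of each matched pair of terms, so the limit is $0$ even though $\zeta(1/2)\neq0$; this is precisely why $s=1/2$ must be excluded.

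Finally, the unconditional normalized statement is routine: dividing the displayed expansion by $n$ lowers every exponent by $1$, turning the two leading exponents into $\sigma-1<0$ and $-\sigma<0$ and the error into $o(n^{a-1})$, so every term tends to $0$ for all $s$ in the strip, including $s=1/2$.
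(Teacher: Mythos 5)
Your proof is correct and takes essentially the same route as the paper, which deduces the corollary directly from the displayed asymptotic expansion with $X(s)=2\pi^{-s}\cos(\pi s/2)\zeta(s)$: in the open strip $X$ vanishes exactly where $\zeta$ does (by the symmetry $\xi(s)=\xi(1-s)$ of the nontrivial zeros), the trailing terms are $o(1)$, and dividing by $n$ makes every exponent negative. The one step the paper leaves implicit---that the limit is not zero on the critical line $\mathrm{Re}(s)=1/2$, $s\neq 1/2$, when $\zeta(s)\neq 0$---you supply correctly via the identity $-X(s)n^{s}+\overline{X(s)}n^{1-s}=-2i\,n^{1/2}\,\mathrm{Im}\bigl(X(s)e^{i\tau\log n}\bigr)$ together with density of the phases $\tau\log n$ modulo $\pi$.
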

As is well known there is a very useful approximative functional equation
for $\zeta(s)$, sometimes called the Riemann-Siegel formula, which
states that \[
\zeta(s)=\sum_{k=1}^{n}\frac{1}{k^{s}}+\pi^{s-1/2}\frac{\Gamma((1-s)/2)}{\Gamma(s/2)}\sum_{k=1}^{m}\frac{1}{k^{1-s}}+R_{n,m}(s),\]
where $R_{m,n}$ is the error term. Notice that the two partial Dirichlet
series here have the same sign, which is a different feature from
the formulas above. A question here is what functional equations prevail
in higher dimension $d$.

\section{The Riemann hypothesis\label{sec:The-Riemann-hypothesis}}

From the asymptotics given in the theorems above there is a straightforward
reformulation of the Riemann hypothesis in terms of the asymptotical
behaviour of \[
\sum_{k=1}^{n-1}\frac{1}{\sin(\pi k/n)^{s}}\]
as $n\rightarrow\infty$ as a function of $s$. It turns out however,
that there is a more unexpected, nontrivial, and, what we think, more
interesting equivalence with the Riemann hypothesis. 

To show this we begin from the second asymptotical formula in Theorem
\ref{thm:d=00003D1}:\[
\sum_{k=1}^{n-1}\frac{1}{\sin(\pi k/n)^{s}}=\frac{1}{\sqrt{\pi}}\frac{\Gamma(1/2-s/2)}{\Gamma(1-s/2)}n+2\pi^{-s}\zeta(s)n^{s}+\frac{s}{3}\pi^{2-s}\zeta(s-2)n^{s-2}+o(n^{s-2})\]
for $0<Re(s)<1$ as $n\rightarrow\infty.$

Let\[
h_{n}(s)=(4\pi)^{s/2}\Gamma(s/2)n^{-s}\left(\zeta_{\mathbb{Z}/n\mathbb{Z}}(s/2)-n\zeta_{\mathbb{Z}}(s/2)\right)=\]
\[
=\pi^{s/2}\Gamma(s/2)n^{-s}\left(\sum_{k=1}^{n-1}\frac{1}{\sin(\pi k/n)^{s}}-\frac{1}{\sqrt{\pi}}\frac{\Gamma(1/2-s/2)}{\Gamma(1-s/2)}n\right).\]
Using the completed Riemann zeta function $\xi(s):=\pi^{-s/2}\Gamma(s/2)\zeta(s)$
the above asymptotics can be restated as\[
h_{n}(s)=2\xi(s)+\alpha(s)n^{-2}+o(n^{-2}),\]
where $\alpha(s):=\frac{s}{3}\pi^{2-s/2}\Gamma(s/2)\zeta(s-2).$ 

From this asymptotics and in view of $\xi(1-s)=\xi(s)$ we conclude
immediately:
\begin{prop}
Let $s\in\mathbb{C}$ with $0<Re(s)<1$ and $\zeta(s)\neq0$. Then
$h_{n}(1-s)\sim h_{n}(s)$ in the sense that\[
\lim_{n\rightarrow\infty}\frac{h_{n}(1-s)}{h_{n}(s)}=1.\]

\end{prop}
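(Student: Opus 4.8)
The plan is to read the claim off directly from the expansion
\[
h_{n}(s) = 2\xi(s) + \alpha(s)n^{-2} + o(n^{-2})
\]
recorded just above, where $\alpha(s)=\frac{s}{3}\pi^{2-s/2}\Gamma(s/2)\zeta(s-2)$. First I would note that this expansion is valid throughout the open strip $0<Re(s)<1$, and that the reflected point $1-s$ again lies in the same strip, so the identical expansion applies at $1-s$ with $\alpha(1-s)$ in place of $\alpha(s)$. Invoking the functional equation $\xi(1-s)=\xi(s)$ then shows that the leading terms of $h_{n}(s)$ and $h_{n}(1-s)$ coincide, giving
\[
h_{n}(s) = 2\xi(s) + O(n^{-2}), \qquad h_{n}(1-s) = 2\xi(s) + O(n^{-2})
\]
as $n\to\infty$.

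The single place where the hypothesis enters is the nonvanishing of this common leading coefficient. Since $0<Re(s)<1$, the factor $\Gamma(s/2)$ is finite and nonzero throughout the strip (the gamma function has neither poles nor zeros there) and $\pi^{-s/2}\neq 0$, so $\xi(s)=\pi^{-s/2}\Gamma(s/2)\zeta(s)$ vanishes exactly when $\zeta(s)$ does. The assumption $\zeta(s)\neq 0$ therefore guarantees $\xi(s)\neq 0$, which is precisely what is needed to divide.

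Finally I would form the quotient
\[
\frac{h_{n}(1-s)}{h_{n}(s)} = \frac{2\xi(s)+O(n^{-2})}{2\xi(s)+O(n^{-2})},
\]
whose numerator and denominator both converge to the nonzero constant $2\xi(s)$, so the ratio converges to $1$. I do not expect any genuine obstacle: the entire analytic difficulty was already absorbed into establishing the refined asymptotics of Theorem~\ref{thm:d=00003D1}, and the present argument reduces to the functional equation together with a limit of a quotient. The only step requiring care is the division, i.e. confirming via $\zeta(s)\neq 0$ that $\xi(s)\neq 0$ so that the limit of the quotient equals the quotient of the limits.
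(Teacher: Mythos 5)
Your proposal is correct and matches the paper's own argument, which likewise derives the proposition ``immediately'' from the expansion $h_{n}(s)=2\xi(s)+\alpha(s)n^{-2}+o(n^{-2})$ combined with the functional equation $\xi(1-s)=\xi(s)$ and the fact that $\zeta(s)\neq0$ forces $\xi(s)\neq0$. You have simply made explicit the division step that the paper leaves implicit.
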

We now conjecture that a weakened version of this asymptotic functional
relation is valid even at zeta zeros:
\begin{conjecture*}
Let $s\in\mathbb{C}$ with $0<Re(s)<1.$ Then\[
\lim_{n\rightarrow\infty}\left|\frac{h_{n}(1-s)}{h_{n}(s)}\right|=1.\]

\end{conjecture*}
From now on we will prove that this is equivalent to the Riemann hypothesis:
\begin{thm*}
The conjecture above is equivalent to the statement that all non-trivial
zeros of $\zeta$ have real part $1/2.$
\end{thm*}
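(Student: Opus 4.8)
The plan is to turn the two–term asymptotics $h_n(s)=2\xi(s)+\alpha(s)n^{-2}+o(n^{-2})$ into a clean dichotomy. For $s$ in the strip with $\xi(s)\neq 0$ (equivalently $\zeta(s)\neq 0$, since $\Gamma(s/2)\neq 0$ there) the functional equation $\xi(1-s)=\xi(s)$ together with the displayed Proposition already gives $h_n(1-s)/h_n(s)\to 1$, so the conjecture holds automatically at every non-zero of $\zeta$. Hence the conjecture is equivalent to the single assertion that $\lim_n|h_n(1-s_0)/h_n(s_0)|=1$ at every zero $s_0$ with $0<\mathrm{Re}(s_0)<1$. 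At such a zero both leading terms vanish, $\xi(s_0)=\xi(1-s_0)=0$, and therefore $n^2h_n(s_0)\to\alpha(s_0)$ and $n^2h_n(1-s_0)\to\alpha(1-s_0)$. Here $\alpha(s_0)\neq 0$, because $\alpha(s)=\tfrac{s}{3}\pi^{2-s/2}\Gamma(s/2)\zeta(s-2)$ and the interval $\mathrm{Re}(s_0-2)\in(-2,-1)$ contains no zero of $\zeta$. Thus I reduce the whole equivalence to the statement: the conjecture at a zero $s_0$ holds if and only if $|\alpha(1-s_0)|=|\alpha(s_0)|$.

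Next I would put $\alpha$ into a usable shape. Writing out $\xi(s)=\pi^{-s/2}\Gamma(s/2)\zeta(s)$ and simplifying the gamma quotient yields the compact form $\alpha(s)=\tfrac{\pi}{6}\,s(s-2)\,\xi(s-2)$, and by $\xi(w)=\xi(1-w)$ one may also write $\xi(s-2)=\xi(3-s)$, moving all transcendental dependence to a value of $\xi$ with real part in $(2,3)$, where $\zeta$ and its logarithmic derivative are tame. With this in hand the direction RH $\Rightarrow$ conjecture is immediate: under RH every zero in the strip lies on $\mathrm{Re}=1/2$, where $1-s_0=\overline{s_0}$; since $\alpha$ is assembled from $\pi$, $\Gamma$, $\sin$ and $n$ it satisfies $\alpha(\overline{s})=\overline{\alpha(s)}$, so $|\alpha(1-s_0)|=|\alpha(\overline{s_0})|=|\alpha(s_0)|$, and combined with the automatic case $\zeta(s)\neq 0$ the conjecture follows. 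For the converse I aim to prove the stronger statement that $|\alpha(1-s)/\alpha(s)|\neq 1$ for every $s$ off the critical line in the strip with $|\mathrm{Im}(s)|$ at least the height of the lowest zero; since any off–line zero meets this height, its existence would violate the conjecture there, forcing RH.

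To prove this off–line statement I would fix $t=\mathrm{Im}(s)$ and set $\psi(\sigma)=\log|\alpha(s)/\alpha(1-s)|$, noting $\psi(1/2)=0$ by the conjugation symmetry used above. Differentiating in $\sigma$ gives $\psi'(\sigma)=\mathrm{Re}\big[\tfrac{\alpha'}{\alpha}(s)+\tfrac{\alpha'}{\alpha}(1-s)\big]$, and inserting the compact form together with the identity $\tfrac{\xi'}{\xi}(w)=-\tfrac{\xi'}{\xi}(1-w)$ reduces $\psi'(\sigma)$ to a few elementary rational terms plus $-\mathrm{Re}\big[\tfrac{\xi'}{\xi}(2+s)+\tfrac{\xi'}{\xi}(3-s)\big]$. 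Expanding $\tfrac{\xi'}{\xi}=\tfrac{\zeta'}{\zeta}+\tfrac12\tfrac{\Gamma'}{\Gamma}(\,\cdot/2\,)-\tfrac12\log\pi$ isolates two digamma factors, each of which by Stirling contributes $\tfrac12\log|t|+O(1)$, so together they produce a dominant $-\log|t|$ in $\psi'$.

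The main obstacle is exactly this sign estimate, which is the content of the lemma on the logarithmic derivative of $\zeta$ referred to above. One must show that the dominant $-\log|t|$ from the gamma factors is never overturned, uniformly in $\sigma\in(0,1)$ and down to the smallest relevant height, by the bounded contribution of $\mathrm{Re}\,\tfrac{\zeta'}{\zeta}$ (controlled by the convergent Dirichlet series, since $\mathrm{Re}(2+s)$ and $\mathrm{Re}(3-s)$ exceed $1$) nor by the $O(1/t)$ rational terms. Granting it, $\psi'(\sigma)<0$ throughout $(0,1)$, whence $\psi(\sigma)=\int_{1/2}^{\sigma}\psi'(u)\,du\neq 0$ for $\sigma\neq 1/2$, i.e. $|\alpha(1-s)/\alpha(s)|\neq 1$ off the line, completing the converse. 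The delicate point is to secure this sign not merely for asymptotically large $|t|$, where the digamma dominance is overwhelming, but already from the first zero height onward, where it is only modest and the perturbing terms must be bounded explicitly.
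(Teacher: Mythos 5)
Your architecture coincides with the paper's at every structural point: the same reduction (at non-zeros the conjecture holds automatically by the paper's Proposition; at a zero $s_0$ one has $n^2h_n(s_0)\to\alpha(s_0)\neq0$ and $n^2h_n(1-s_0)\to\alpha(1-s_0)$, so the conjecture there is exactly $|\alpha(1-s_0)|=|\alpha(s_0)|$, which is the paper's Lemma \ref{lem:functionalequation} — and your explicit check that $\alpha(s_0)\neq0$, via $\mathrm{Re}(s_0-2)\in(-2,-1)$, is a point the paper leaves implicit); the same treatment of RH $\Rightarrow$ conjecture via $\alpha(\overline{s})=\overline{\alpha(s)}$ and $1-s_0=\overline{s_0}$ on the line; and the same converse strategy of strict monotonicity in $\sigma$ at fixed large $|t|$, vanishing only at $\sigma=1/2$ (the paper phrases this as $|\zeta(s+2)/\zeta(s-2)|$ strictly increasing against $4\pi^2/|s^2-1|$ strictly decreasing, which is equivalent to your single-function statement $\psi'(\sigma)<0$). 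Where you genuinely diverge is the key analytic lemma. The paper must control $\mathrm{Re}(\zeta'/\zeta)(s-2)$ \emph{left} of the critical strip, and does so in Lemma \ref{lem:Riemann} via the Mittag-Leffler expansion $\tilde{\xi}'/\tilde{\xi}(w)=\sum_{\rho}(w-\rho)^{-1}$, using that each term has negative real part at $w=s-2$. You instead rewrite $\alpha(s)=\tfrac{\pi}{6}s(s-2)\xi(s-2)$ (correct, since $\Gamma(s/2)=\tfrac{s-2}{2}\Gamma((s-2)/2)$) and reflect $\xi(s-2)=\xi(3-s)$, so that both logarithmic derivatives of $\zeta$ sit at real part in $(2,3)$, where the absolutely convergent Dirichlet series gives $|\mathrm{Re}(\zeta'/\zeta)|\le-\zeta'(2)/\zeta(2)<0.57$ directly. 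That replacement of the Hadamard-product positivity argument by a functional-equation reflection is a real simplification, and your derivative identity $\psi'(\sigma)=\mathrm{Re}\bigl[\tfrac{\alpha'}{\alpha}(s)+\tfrac{\alpha'}{\alpha}(1-s)\bigr]$ checks out.

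The one place your plan overreaches is the threshold, and you flag it yourself but then ``grant'' it. Your own bounds — digamma contribution $\approx-\log(|t|/2)$, plus $\log\pi$ from the two $\pi$-factors, plus at most $2\times0.57$ from the two $\zeta'/\zeta$ terms, plus $O(t^{-2})$ rational terms — give $\psi'(\sigma)<0$ only for $\log(|t|/2\pi)>1.14$, i.e.\ $|t|\gtrsim20$, not down to the first zero ordinate $\approx14.13$ that your formulation of the converse requires; closing the window $14.13\le|t|\lesssim20$ would demand genuinely sharper, $t$-specific estimates that your sketch does not supply. But this gap is inessential: the paper faces the identical issue (its Lemma \ref{lem:Riemann} holds only for $|t|>26$, and the authors note it fails numerically for small $t$) and closes it not by sharpening the inequality but by citing the known verification that all zeros with $|\mathrm{Im}(s)|\le26$ lie on the critical line. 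Your argument is completed the same way — quote the verified low-lying zeros up to height $20$ (or $26$) instead of insisting on the first-zero threshold. With that standard citation substituted for your unproven range, your proof is complete, and in its key lemma somewhat more elementary than the paper's.
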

We begin the proof with a simple observation:
\begin{lem}
\label{lem:functionalequation}Suppose $\zeta(s)=0.$ Then the asymptotic
relation\[
\lim_{n\rightarrow\infty}\left|\frac{h_{n}(1-s)}{h_{n}(s)}\right|=1\]
 is equivalent to $\left|\alpha(1-s)\right|=\left|\alpha(s)\right|$. 
\end{lem}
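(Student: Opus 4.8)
The plan is to substitute the expansion $h_n(s) = 2\xi(s) + \alpha(s)n^{-2} + o(n^{-2})$ into both the numerator and the denominator of the ratio, using the hypothesis $\zeta(s)=0$ to annihilate the leading terms. First I would note that for $s$ in the critical strip the argument $s/2$ has real part in $(0,1/2)$, so $\Gamma(s/2)$ is finite and nonzero; hence $\xi(s)=\pi^{-s/2}\Gamma(s/2)\zeta(s)$ vanishes exactly when $\zeta(s)$ does. Thus $\zeta(s)=0$ forces $\xi(s)=0$, and the Riemann functional equation gives $\xi(1-s)=\xi(s)=0$. The two constant terms $2\xi(s)$ and $2\xi(1-s)$ therefore drop out, leaving
\[
h_n(s) = \alpha(s)n^{-2} + o(n^{-2}), \qquad h_n(1-s) = \alpha(1-s)n^{-2} + o(n^{-2}).
\]

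The key intermediate step is to verify that $\alpha(s)\neq 0$ throughout the open critical strip, so that the ratio is eventually well-defined and converges to a finite nonzero value. Since $\alpha(s)=\tfrac{s}{3}\pi^{2-s/2}\Gamma(s/2)\zeta(s-2)$ with $s\neq 0$ and $\Gamma(s/2)\neq 0$, this reduces to the claim $\zeta(s-2)\neq 0$. Here $s-2$ has real part in $(-2,-1)$, a vertical strip containing no zeros of $\zeta$: the nearest trivial zeros lie on $\mathrm{Re}=-2$, and by the functional equation the only zeros with $\mathrm{Re}<0$ come from $\sin(\pi s/2)=0$, which occurs only at even integers. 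The identical reasoning shows $\alpha(1-s)\neq 0$. Multiplying numerator and denominator by $n^2$ then yields $n^2 h_n(s)\to\alpha(s)$ and $n^2 h_n(1-s)\to\alpha(1-s)$, so that
\[
\lim_{n\to\infty}\left|\frac{h_n(1-s)}{h_n(s)}\right| = \frac{|\alpha(1-s)|}{|\alpha(s)|}.
\]

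Because $|\alpha(s)|>0$, this limit equals $1$ if and only if $|\alpha(1-s)|=|\alpha(s)|$, which is precisely the asserted equivalence, and both directions follow at once from the displayed limit. I expect the only point genuinely requiring care to be the non-vanishing of $\alpha$ in the strip: without it the denominator could decay faster than $n^{-2}$, the limit need not exist, and the clean reduction to a modulus comparison would break down. Everything else is a direct substitution into the already-established asymptotic expansion of $h_n$ together with $\xi(1-s)=\xi(s)$.
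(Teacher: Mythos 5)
Your proof is correct and is precisely the argument the paper intends: the paper states this lemma without proof as a ``simple observation,'' and your route --- substituting $h_n(s)=2\xi(s)+\alpha(s)n^{-2}+o(n^{-2})$, using $\Gamma(s/2)\neq 0$ to get $\xi(s)=0$ from $\zeta(s)=0$, and invoking $\xi(1-s)=\xi(s)$ to kill both leading terms --- is the implicit one. Your explicit check that $\alpha(s)$ and $\alpha(1-s)$ are nonzero in the strip (since $\zeta$ has no zeros with real part in $(-2,-1)$) correctly supplies the single point of care that the paper leaves unstated.
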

Next we have:
\begin{lem}
The equation $\left|\alpha(1-s)\right|=\left|\alpha(s)\right|$ holds
for all $s$ on the critical line $Re(s)=1/2$.\end{lem}
\begin{proof}
Recall that \[
\alpha(s)=\frac{s}{3}\pi^{2-s/2}\Gamma(s/2)\zeta(s-2).\]
Since $\zeta(\overline{s})=\overline{\zeta(s)}$ and $\Gamma(\overline{s})=\overline{\Gamma(s)}$,
we have that $\alpha(\overline{s})=\overline{\alpha(s)}$. Therefore
if $s=1/2+it$, then \[
\alpha(1-s)=\alpha(1-1/2-it)=\alpha(\overline{1/2+it})=\overline{\alpha(s)},\]
which implies the lemma.
\end{proof}
Note that using $\xi((1-s)-2)=\xi(s+2)$ and Euler's reflection formula
$\Gamma(z)\Gamma(1-z)=\pi/\sin(\pi z)$ we have \[
\left|\frac{\alpha(1-s)}{\alpha(s)}\right|=\left|\frac{\frac{(s-1)(s+1)}{6}\pi\pi^{-(s+2)/2}\Gamma((s+2)/2)\zeta(s+2)}{\frac{s(s-2)}{6}\pi\pi^{-(s-2)/2}\Gamma((s-2)/2)\zeta(s-2)}\right|=\left|\frac{\zeta(s+2)(s-1)(s+1)}{\zeta(s-2)4\pi^{2}}\right|.\]
As a consequence $\left|\alpha(1-s)\right|=\left|\alpha(s)\right|$
is equivalent to\[
\left|\frac{\zeta(s+2)}{\zeta(s-2)}\right|=\frac{4\pi^{2}}{\left|s^{2}-1\right|}.\]

We will study the right and left hand sides as functions of $\sigma$,
in the interval $0<\sigma<1$, with $s=\sigma+it$ and $t>0$ fixed.
In view of that \[
\frac{1}{\left|s^{2}-1\right|^{2}}=\frac{1}{\sigma^{4}+2\sigma^{2}+(t^{2}-1)^{2}},\]
we see that the right hand side is strictly decreasing in $\sigma$.
On the other hand we have the following:
\begin{lem}
\label{lem:Riemann}Let $s=\sigma+it$, with $t$ fixed such that
$\left|t\right|>26$. Then the function\[
\left|\frac{\zeta(s+2)}{\zeta(s-2)}\right|\]
is strictly increasing in $0<\sigma<1$.\end{lem}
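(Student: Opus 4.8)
The plan is to show that the $\sigma$-derivative of the logarithm of the modulus is strictly positive throughout $0<\sigma<1$. Writing $g(\sigma)=\log\bigl|\zeta(s+2)/\zeta(s-2)\bigr|$ with $s=\sigma+it$ and $t$ fixed, I first note that $\zeta$ is zero-free both for $\mathrm{Re}\,w\ge 2$ and for $\mathrm{Re}\,w\in(-2,-1)$ with $w$ off the real axis, so the ratio is finite and nonzero and $g$ is smooth. Using that for a holomorphic nonvanishing $f$ one has $\partial_\sigma\log|f(\sigma+it)|=\mathrm{Re}\,(f'/f)(\sigma+it)$ (since $\partial s/\partial\sigma=1$), I would reduce the lemma to proving
\[
\mathrm{Re}\,\frac{\zeta'(s+2)}{\zeta(s+2)}-\mathrm{Re}\,\frac{\zeta'(s-2)}{\zeta(s-2)}>0
\]
for all $\sigma\in(0,1)$ when $|t|>26$. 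Here $s+2$ has real part in $(2,3)$, safely inside the region of absolute convergence, while $s-2$ has real part in $(-2,-1)$, to the left of the critical strip; the whole difficulty is that this second logarithmic derivative sits outside the Dirichlet-series range.

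To handle it I would invoke the functional equation $\zeta(w)=\chi(w)\zeta(1-w)$ with $\chi(w)=\pi^{w-1/2}\Gamma((1-w)/2)/\Gamma(w/2)$, applied at $w=s-2$. Differentiating $\log\zeta(s-2)=\log\chi(s-2)+\log\zeta(3-s)$ in $s$ gives
\[
\frac{\zeta'(s-2)}{\zeta(s-2)}=\frac{\chi'(s-2)}{\chi(s-2)}-\frac{\zeta'(3-s)}{\zeta(3-s)},
\]
and now $3-s$ has real part $3-\sigma\in(2,3)$, back inside the convergent range. Thus the quantity to bound becomes
\[
\mathrm{Re}\,\frac{\zeta'(s+2)}{\zeta(s+2)}+\mathrm{Re}\,\frac{\zeta'(3-s)}{\zeta(3-s)}-\mathrm{Re}\,\frac{\chi'(s-2)}{\chi(s-2)}.
\]

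For the first two terms I would use $-\zeta'(w)/\zeta(w)=\sum_{n\ge1}\Lambda(n)n^{-w}$, which gives $\bigl|\mathrm{Re}\,(\zeta'/\zeta)(w)\bigr|\le -\zeta'(\mathrm{Re}\,w)/\zeta(\mathrm{Re}\,w)\le -\zeta'(2)/\zeta(2)\approx 0.570$ whenever $\mathrm{Re}\,w\ge2$; this applies to both $s+2$ and $3-s$. For the main term, differentiating $\log\chi$ yields $\chi'(w)/\chi(w)=\log\pi-\tfrac12\psi((1-w)/2)-\tfrac12\psi(w/2)$ with $\psi=\Gamma'/\Gamma$, so at $w=s-2$, using $\mathrm{Re}\,\psi(\bar z)=\mathrm{Re}\,\psi(z)$,
\[
-\mathrm{Re}\,\frac{\chi'(s-2)}{\chi(s-2)}=-\log\pi+\tfrac12\mathrm{Re}\,\psi\!\Bigl(\tfrac{3-\sigma}{2}+i\tfrac{t}{2}\Bigr)+\tfrac12\mathrm{Re}\,\psi\!\Bigl(\tfrac{\sigma-2}{2}+i\tfrac{t}{2}\Bigr).
\]
The Stirling expansion $\psi(z)=\log z-\tfrac1{2z}+O(|z|^{-2})$ gives $\mathrm{Re}\,\psi(z)=\log|z|+O(|z|^{-1})$, and since both arguments have modulus at least $|t|/2$ this yields $-\mathrm{Re}\,(\chi'/\chi)(s-2)=\log(|t|/2\pi)+O(|t|^{-1})$, uniformly in $\sigma\in(0,1)$.

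Assembling the pieces, the derivative is at least $\log(|t|/2\pi)-2\bigl(-\zeta'(2)/\zeta(2)\bigr)-O(|t|^{-1})$, which is positive once $\log(|t|/2\pi)$ exceeds roughly $2\cdot 0.570=1.14$, i.e. $|t|\gtrsim 2\pi e^{1.14}\approx 20$; the cushion up to $26$ absorbs the explicit error terms. The main obstacle is precisely this last, quantitative step: turning the heuristic $\mathrm{Re}\,\psi(z)\approx\log|z|$ into an explicit lower bound for $-\mathrm{Re}\,(\chi'/\chi)(s-2)$ that is uniform in $\sigma$ over $(0,1)$, which requires an effective remainder bound in the digamma asymptotic valid for arguments whose real part ranges through $(-1,\tfrac32)$ while the imaginary part is large. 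Because the imaginary part is $t/2$ with $|t|>26$, the arguments stay near the positive imaginary axis, far from the poles of $\psi$, so the standard explicit Stirling remainder applies and delivers the stated threshold.
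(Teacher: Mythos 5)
Your proof is correct, and it takes a genuinely different route through the one hard step. The paper makes the same reduction you do, namely to showing $\mathrm{Re}\,(\zeta'/\zeta)(s+2)-\mathrm{Re}\,(\zeta'/\zeta)(s-2)>0$, and bounds the term at $s+2$ exactly as you do by $-\zeta'(2)/\zeta(2)<0.57$ via the von Mangoldt series; but for the term at $s-2$ it invokes the Mittag-Leffler expansion $\tilde{\xi}'(w)/\tilde{\xi}(w)=\sum_{\rho}(w-\rho)^{-1}$ over the nontrivial zeros, observes that its real part is negative for $w=s-2$ lying to the left of the critical strip, and then unwinds the completed function to isolate $\mathrm{Re}\,(-\zeta'/\zeta)(s-2)$, leaving only $\mathrm{Re}\,\psi(s/2)$ with argument \emph{inside} the critical strip to estimate by Stirling. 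You instead reflect $s-2$ to $3-s$ through the asymmetric functional equation $\zeta(w)=\chi(w)\zeta(1-w)$, paying a second Dirichlet-series bound (another $0.57$) but collecting the full factor $-\mathrm{Re}\,(\chi'/\chi)(s-2)=\log(|t|/2\pi)+O(|t|^{-1})$; this avoids any reference to the zeros and is in fact numerically a little stronger, since you only need $\log(|t|/2\pi)>1.14+o(1)$, i.e.\ $|t|\gtrsim20$, whereas the paper's constants force $|t|\geq26$ (the threshold in the statement). The one place where your argument needs more care than the paper's is the digamma value at $\tfrac{\sigma-2}{2}+i\tfrac{t}{2}$, whose real part lies in $(-1,-\tfrac12)$: the explicit Stirling remainder the paper quotes from \cite{MSZ14} is stated for arguments in the critical strip, hence with positive real part. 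As you indicate, this is repaired either by the sector form of Stirling (valid for $|\arg z|\leq\pi-\delta$, applicable since your arguments sit near the positive imaginary axis with $|\mathrm{Im}\,z|\geq13$) or by shifting with $\psi(z)=\psi(z+1)-1/z$ at the cost of an $O(|t|^{-1})$ correction, which your margin absorbs; with that detail made explicit the proof is complete. The two arguments are of course cousins --- the zero expansion and the functional equation encode the same symmetry --- but yours is self-contained modulo Stirling and the $\Lambda$-series, while the paper's displays the connection to the positivity property of $\tilde{\xi}'/\tilde{\xi}$ used in \cite{L99} and \cite{SD10}.
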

\begin{proof}
As remarked in \cite{MSZ14}, for a homolorphic function $f$, a simple
calculation, using the Cauchy-Riemann equation, leads to \[
Re(f'(s)/f(s))=\frac{1}{\left|f(s)\right|}\frac{\partial\left|f(s)\right|}{\partial\sigma},\]
in any domain where $f(z)\neq0$. This implies that for $\left|f\right|$
to be increasing in $\sigma$ we should show that the real part of
its logarithmic derivative is positive.

We begin with one of the two terms in the logarithmic derivative of
$\zeta(s+2)/\zeta(s-2)$: \[
Re(\zeta'(s+2)/\zeta(s+2))=-Re(\sum_{n\geq1}\Lambda(n)n^{-s-2})=-\sum_{n\geq1}\Lambda(n)n^{-\sigma-2}\cos(t\log n),\]
where $\Lambda(n)$is the von Mangoldt function. So\[
\left|Re(\zeta'(s+2)/\zeta(s+2))\right|\leq\sum_{n\geq1}\Lambda(n)n^{-2}=-\frac{\zeta'(2)}{\zeta(2)}=\gamma+\log(2\pi)-12\log A<0.57,\]
by known numerics. We are therefore left to show that the other term
\[
Re(-\zeta'(s-2)/\zeta(s-2))\geq0.57.\]

On the one hand, following the literature, see \cite{L99,SD10,MSZ14},
from the Mittag-Leffler expansion we have\[
\frac{\tilde{\xi}'(s)}{\tilde{\xi}(s)}=\sum_{\rho}\frac{1}{s-\rho}\]
where the sum is taken over the zeros which all lie in the critical
strip. (The function $\tilde{\xi}(s)$ is defined by $\tilde{\xi}(s)=(s-1)\Gamma(1+s/2)\pi^{-s/2}\zeta(s)$.)
This implies by a simple termwise calculation (\cite{MSZ14}) that
since $s-2$ is to the left of the critical strip, we have $Re(\tilde{\xi}'(s-2)/\tilde{\xi}(s-2))<0$
in the interval $0<\sigma<1$. On the other hand\[
0>Re(\tilde{\xi}'(s-2)/\tilde{\xi}(s-2))=Re(1/(s-3))+\frac{1}{2}Re(\psi(s/2))-\frac{1}{2}\log\pi+Re(\zeta'(s-2)/\zeta(s-2)),\]
where $\psi$ is the logarithmic derivative of the gamma function.
We estimate \[
Re(1/(s-3))=\frac{\sigma-3}{(\sigma-3)^{2}+t^{2}}>\frac{-3}{4+t^{2}}>-\frac{3}{4+144}>-0.03\]
and $-\log\pi>-1.2$. Hence\[
Re(-\zeta'(s-2)/\zeta(s-2))>-0.7+Re(\psi(s/2))/2.\]
The last thing to do is to estimate the psi-function. Following \cite{MSZ14},
we have using Stirling's formula for $\psi$, \[
Re(\psi(s))=\log\left|s\right|-\frac{\sigma}{2\left|s\right|^{2}}+Re(R(s)),\]
where $\left|R(s)\right|\leq\sqrt{2}/(6\left|s\right|^{2})$. This
is valid for any $s=\sigma+it$ in the critical strip. We observe
that\[
-\frac{\sigma}{2\left|s\right|^{2}}\geq-\frac{1}{2t^{2}}\]
so \[
Re(\psi(s/2))\geq\log\frac{\left|t\right|}{2}-\frac{2}{t^{2}}-\frac{2\sqrt{2}}{3t^{2}}\geq2.56\]
if $\left|t\right|\geq26.$ This completes the proof.
\end{proof}
Note that by numerics one can see that the lemma does not hold for
small $t$. The lemma implies that the left and right hand sides can
be equal only once for a fixed $t$, and this occurs at $Re(s)=1/2$
as shown above. We summarize this in the following statement which
concerns just the Riemann zeta function: 
\begin{prop}
For $s\in\mathbb{C}$ with $0<Re(s)<1,$ with $\left|Im(s)\right|>26,$
the equality $\left|\alpha(1-s)\right|=\left|\alpha(s)\right|$ holds
if and only if $Re(s)=1/2$. 
\end{prop}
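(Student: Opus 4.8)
The plan is to assemble the pieces that have already been established and reduce the statement to the two monotonicity facts proved just above. Recall that in the lemmas preceding this proposition I have shown that the condition $\left|\alpha(1-s)\right|=\left|\alpha(s)\right|$ is, after simplification using the functional equation $\xi((1-s)-2)=\xi(s+2)$ and Euler's reflection formula, equivalent to the single equation
\[
\left|\frac{\zeta(s+2)}{\zeta(s-2)}\right|=\frac{4\pi^{2}}{\left|s^{2}-1\right|}.
\]
So the whole proposition becomes a statement about where these two functions of $\sigma$ (with $s=\sigma+it$ and $t$ fixed) can agree. First I would record that one implication is already in hand: by the lemma stating $\left|\alpha(1-s)\right|=\left|\alpha(s)\right|$ holds on the critical line, we know that $Re(s)=1/2$ always produces equality, so the ``if'' direction is free.

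For the ``only if'' direction, the strategy is a monotonicity-versus-monotonicity comparison. I would view both sides of the displayed equation as real-valued functions of $\sigma$ on the open interval $(0,1)$, holding $t$ fixed with $\left|t\right|>26$. The right-hand side $4\pi^{2}/\left|s^{2}-1\right|$ is strictly decreasing in $\sigma$, as I verified above from the explicit formula $\left|s^{2}-1\right|^{2}=\sigma^{4}+2\sigma^{2}+(t^{2}-1)^{2}$. The left-hand side $\left|\zeta(s+2)/\zeta(s-2)\right|$ is strictly increasing in $\sigma$ by Lemma \ref{lem:Riemann}, which is exactly where the hypothesis $\left|t\right|>26$ is consumed. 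A strictly decreasing function and a strictly increasing function on an interval can coincide at most once.

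Therefore, for each fixed $t$ with $\left|t\right|>26$, there is at most one value of $\sigma\in(0,1)$ at which equality holds. Since I already know equality holds at $\sigma=1/2$, this single crossing must be precisely at $Re(s)=1/2$, which gives the ``only if'' direction and completes the proof. I do not expect any genuine obstacle here, since the proposition is essentially a packaging of the earlier lemmas; the real work was already done in establishing the two opposite monotonicities, and in particular Lemma \ref{lem:Riemann}, whose proof required the delicate Stirling and Mittag-Leffler estimates and forced the restriction $\left|t\right|>26$. The only points to state carefully are that both sides are finite and nonzero throughout $(0,1)$ for the relevant range of $t$ (so that the comparison is meaningful and the quotient in the equivalence is well defined), and that the strictness of each monotonicity is what upgrades ``at most one crossing'' into the desired uniqueness.
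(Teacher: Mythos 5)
Your proposal is correct and takes essentially the same route as the paper: both reduce $\left|\alpha(1-s)\right|=\left|\alpha(s)\right|$ to the equation $\left|\zeta(s+2)/\zeta(s-2)\right|=4\pi^{2}/\left|s^{2}-1\right|$, then combine the strict decrease of the right-hand side in $\sigma$ with the strict increase of the left-hand side from Lemma \ref{lem:Riemann} (where $\left|t\right|>26$ is used) to conclude there is at most one crossing, which the critical-line lemma locates at $Re(s)=1/2$.
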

Therefore, since it is known that the Riemann zeta zeros in the critical
strip having imaginary part less than 26 in absolute value all lie
on the critcal line and in view of Lemma \ref{lem:functionalequation},
the equivalence between the graph zeta functional equation and the
Riemann hypothesis is established.

\noindent Fabien Friedli 

\noindent Section de mathématiques \\
Université de Genève

\noindent 2-4 Rue du Lièvre\\
Case Postale 64

\noindent 1211 Genève 4, Suisse 

\noindent e-mail: fabien.friedli@unige.ch

\medskip{}

\noindent Anders Karlsson 

\noindent Section de mathématiques \\
Université de Genève

\noindent 2-4 Rue du Lièvre\\
Case Postale 64

\noindent 1211 Genève 4, Suisse 

\noindent e-mail: anders.karlsson@unige.ch 

and

Matematiska institutionen

Uppsala universitet

Box 256

751 05 Uppsala, Sweden

\noindent e-mail: anders.karlsson@math.uu.se
\end{document}